\numberwithin{equation}{section}
\newtheorem{thm}{Theorem}
\newtheorem{prop}{Proposition}
\newtheorem{lem}{Lemma}
\newtheorem{cor}{Corollary}
\theoremstyle{definition}
\newtheorem{dfn}{Definition}
\newtheorem*{rmk}{Remark}
\newtheorem{conj}{Conjecture}
\thanks{}
\begin{document}

\title{Connection Blocking In Quotients of \textit{Sol} }
\author[R. Bidar]{Mohammadreza Bidar}
\address{Department of Mathematics\\
                 Michigan State University\\
                 East Lansing, MI 48824}

\date{\today}

\begin{abstract}

Let $G$ be a connected Lie group and $\Gamma \subset G$ a lattice. Connection curves of the homogeneous space $M=G/\Gamma$ are the orbits of one parameter subgroups of $G$. To \textit{block} a pair of points $m_1,m_2 \in M$ is to find a \textit{finite} set $B \subset M\setminus \{m_1, m_2 \}$ such that every connecting curve joining $m_1$ and $m_2$ intersects $B$. The homogeneous space $M$ is \textit{blockable} if every pair of points in $M$ can be blocked, otherwise we call it \textit{non-blockable}.
\par
\textit{Sol} is an important Lie group and one of the eight homogeneous Thurston 3-geometries. It is a unimodular solvable Lie group diffeomorphic to $\mathds{R}^3$, and together with the left invariant metric $ds^2=e^{-2z}dx^2+e^{2z}dy^2+dz^2$ includes copies of the hyperbolic plane, which makes studying its geometrical properties more interesting. In this paper we prove that all quotients of $Sol$ are non-blockable. In particular, we show that for any lattice $\Gamma \subset Sol$, the set of non-blockable pairs is a dense subset of $Sol/\Gamma \times Sol/\Gamma$. 

\end{abstract}

\maketitle


\section{Introduction}

Finite blocking is an interesting concept originating as a problem in billiard dynamics and later in the context of Riemannian manifolds. Let $(M,g)$ be a complete connected, infinitely differentiable Riemannian manifold. For a pair of (not necessarily distinct) points $m_1,m_2 \in M$ let $\Gamma(m_1,m_2)$ be the set of geodesic segments joining these points. A set $B \subset M\setminus \{m_1,m_2\}$ is \textit{blocking} if every $\gamma \in \Gamma(m_1,m_2)$ intersects $B$. The pair $m_1,m_2$ is secure if there is a \textit{finite blocking} set $B=B(m_1,m_2)$. A manifold is secure if all pairs of points are secure. If there is a uniform bound on the cardinalities of blocking sets, the manifold is \textit{uniformly secure} and the best possible bound is the \textit{blocking number}. 
\vspace{0.1in}
\par
Now, the first question naturally arising is what Riemannian manifolds are secure. If we focus on closed Riemannian manifolds, there is the following conjecture \cite{Growth of the number of geodesics,Blocking light}:

\begin{conj} A closed Riemannian manifold is secure if and only if it is flat.
\end{conj}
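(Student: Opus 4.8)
The plan is to separate the two implications: ``flat $\Rightarrow$ secure'' is elementary and I would dispose of it completely, while ``secure $\Rightarrow$ flat'' carries all of the difficulty and is precisely why this statement is a conjecture rather than a theorem. For the first direction I would lift to the universal cover. If $M$ is closed and flat then, by the Bieberbach structure theorem, $M=\bbR^n/\Gamma$ for a crystallographic group $\Gamma$ of isometries of $\bbR^n$ containing a finite-index translation lattice $\Lambda\cong\bbZ^n$ with finite point group $F=\Gamma/\Lambda$. Fixing lifts $\tilde m_1,\tilde m_2$, the geodesic segments from $m_1$ to $m_2$ are exactly the projections of the straight segments $[\tilde m_1,\gamma\tilde m_2]$ for $\gamma\in\Gamma$, and each such segment is blocked by its own midpoint $\tfrac12(\tilde m_1+\gamma\tilde m_2)$. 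Writing $\gamma\colon x\mapsto Ax+v$ with rotational part $A\in F$ and, for fixed $A$, translational part $v$ ranging over a single $\Lambda$-coset, these midpoints fall into at most $|F|\cdot 2^{n}$ classes modulo $\Lambda$. Thus a single finite set of at most $|F|\cdot 2^{n}$ points blocks every pair, which is in fact uniform security.

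For ``secure $\Rightarrow$ flat'' I would argue by contraposition and route the geometry through the topological entropy $h_{\mathrm{top}}$ of the geodesic flow. The key input is the insecurity-from-entropy principle of Burns and Gutkin: a closed manifold whose geodesic flow has positive topological entropy is \emph{totally} insecure, so security forces $h_{\mathrm{top}}=0$. By Mañé's identity, $h_{\mathrm{top}}$ equals the exponential growth rate of the averaged number $n_T(p,q)$ of connecting geodesics of length at most $T$, so security already compels this counting function to grow subexponentially; I would then try to promote the uniform blocking isolated in the flat case (here taken as a hypothesis on the secure manifold) into a genuinely \emph{polynomial} bound on $n_T(p,q)$. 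The target at the end of this chain is a rigidity theorem: a closed Riemannian manifold on which the number of connecting geodesics grows polynomially must be flat.

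The main obstacle is exactly that final rigidity step. Zero entropy is necessary but far from sufficient for flatness: the round sphere, for instance, has $h_{\mathrm{top}}=0$ yet is neither flat nor even secure, which shows that the entropy killed by security is strictly weaker information than flatness and that one must extract more from the blocking hypothesis itself. Where additional curvature control is available I expect the program to close: under nonpositive curvature the flat-strip and rank-rigidity machinery (in the spirit of Gutkin–Schroeder and Lafont–Schmidt) forces a secure manifold to split off enough flats to be flat, the no-conjugate-points case runs parallel, and for locally symmetric spaces of noncompact type one refutes security outright from their exponential geodesic growth. Passing from subexponential or polynomial connecting-geodesic growth to flatness with no curvature assumption is the genuinely open heart of the conjecture, and is the step at which I expect any complete proof to stall.
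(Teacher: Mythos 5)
This statement is Conjecture~1 of the paper: it is quoted as an \emph{open conjecture} from the literature, and the paper supplies no proof of it (nor claims one) --- it only surveys the known partial results in the surrounding paragraph. So there is no ``paper's own proof'' to compare against, and you were right not to pretend to a complete argument. Your treatment of the easy direction (flat $\Rightarrow$ secure) is correct and is exactly the standard midpoint-blocking argument behind the results the paper cites: lifting to $\bbR^n$ via Bieberbach, blocking each segment $[\tilde m_1,\gamma\tilde m_2]$ at its midpoint, and observing that the midpoints fall into finitely many classes modulo the translation lattice, giving uniform security with a dimension-dependent bound. (One standard technicality you should acknowledge: a midpoint may coincide with $m_1$ or $m_2$ in the quotient, and such geodesics need a separate, equally finite, blocking argument.) Your map of the converse direction also matches the paper's survey: no conjugate points, compact locally symmetric spaces, positive-genus surfaces, and genericity results are precisely the cases listed there as settled.

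The one substantive inaccuracy is in your entropy step. The Burns--Gutkin theorem does not say that positive topological entropy forces total insecurity; what is proved is that \emph{uniform} security implies polynomial growth of the number of connecting geodesics and hence zero topological entropy. For a manifold that is merely secure --- with no uniform bound on the cardinality of blocking sets --- even the implication ``secure $\Rightarrow$ $h_{\mathrm{top}}=0$'' is not established, so your chain stalls one step earlier than you indicate: you cannot ``take uniform blocking as a hypothesis on the secure manifold,'' since promoting security to uniform security is itself part of the open problem. Your final assessment is nevertheless the correct one: the rigidity step from subexponential (or polynomial) geodesic growth to flatness, absent curvature or conjugate-point hypotheses, is the genuinely open heart of the conjecture, which is exactly why the paper states it as Conjecture~1 rather than as a theorem.
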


Flat manifolds are uniformly secure, and the blocking number depends only on their dimension \cite{Connecting geodesics and security, Blocking of billiard}. They are also \textit{midpoint secure}, i.e., the midpoints of connecting geodesics yield a finite blocking set for any pair of points \cite{Connecting geodesics and security, Insecurity for compact,Blocking of billiard}. Conjecture 1. says that flat manifolds are the only secure manifolds. This has been verified for several special cases: A manifold without conjugate points is uniformly secure if and only if it is flat \cite{Growth of the number of geodesics,Blocking light}; a compact locally symmetric space is secure if and only if it is flat \cite{Connecting geodesics and security}; the generic manifold is insecure \cite{dense G-delta,Real Analytic metrics,Insecurity is generic}; Conjecture 1. holds for compact Riemannian surfaces with genus bigger or equal than 1 \cite{Insecurity for compact}; any Riemannian metric has an arbitrarily close, insecure metric in the same conformal class \cite{Insecurity is generic}.  
\vspace{0.1in}
\par
Gutkin \cite{Connection blocking} initiated the study of blocking properties of homogeneous spaces. Here, connection curves are the orbits of one-parameter subgroups of $G$. In this context, he speaks of \textit{finite blocking} instead of security; the counterpart of "secure" in this context is the term \textit{connection blockable}, or simply \textit{blockable}. A counterpart of Conjecture 1 for homogeneous spaces is as follows:

\begin{conj} 
Let $M=G/\Gamma$ where where $G$ is a connected Lie group and $\Gamma \subset G$ is a lattice. Then $M$ is blockable if and only if $G=\mathds{R}^n$, i.e., $M$ is a torus.
\end{conj}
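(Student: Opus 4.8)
\section*{Proof proposal}

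The plan is to treat the two implications separately, the first being elementary and the second being the substantive (and in full generality still conjectural) half. For the \emph{if} direction, suppose $G=\mathbb{R}^n$, so that $\Gamma$ is a cocompact lattice and $M=\mathbb{R}^n/\Gamma$ is a flat torus. Here every one-parameter subgroup is a line $t\mapsto tX$, so the connection curves are exactly the geodesics of the flat metric. Fixing a pair $(m_1,m_2)$ and lifts $\tilde m_1,\tilde m_2\in\mathbb{R}^n$, the connecting curves correspond to the segments from $\tilde m_1$ to $\tilde m_2+\gamma$, $\gamma\in\Gamma$, and each such segment has midpoint $\tfrac12(\tilde m_1+\tilde m_2+\gamma)$. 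Projecting to $M$, these midpoints range over the finite set $\{\tfrac12(\tilde m_1+\tilde m_2)+\tfrac12\gamma+\Gamma:\gamma\in\Gamma\}$, which has $[\Gamma:2\Gamma]=2^n$ elements. Thus the midpoints form a finite blocking set and $M$ is blockable (indeed uniformly, with blocking number at most $2^n$). One also checks that a connected \emph{abelian} $G$ reduces to this case, so the content of the other direction is the non-abelian one; I will assume $G$ simply connected, the general case following by passing to covers.

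For the \emph{only if} direction I would first recast connection curves algebraically. A connection curve joining $e\Gamma$ to $g\Gamma$ is the projection of $t\mapsto\exp(tX)$, $t\in[0,1]$, for some $X\in\mathfrak{g}$ with $\exp(X)\in g\Gamma$; equivalently it is indexed by an element $h\in g\Gamma$ lying in the image of $\exp$ together with a choice of logarithm $X=\log h$. Its midpoint is $\exp(X/2)\Gamma$, a ``square root'' of $h$. The torus computation above succeeds precisely because this set of roots is finite; the goal is to show that for non-abelian $G$ it is not only infinite but \emph{unblockable by any finite set}. The criterion I would aim for is a local-finiteness statement: produce an infinite family $\{c_k\}$ of pairwise distinct connecting curves for a single pair such that each point of $M$ lies on only finitely many $c_k$. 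Given such a family, any finite candidate blocking set $B$ meets only finitely many $c_k$, leaving infinitely many connecting curves unblocked, so no finite blocking set exists.

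The construction of such a family would proceed by the structure theory of Lie groups admitting lattices. Using the Levi--Mostow decomposition one reduces to the cases where $G$ has a nontrivial semisimple factor, where $G$ is non-abelian nilpotent, and where $G$ is non-abelian solvable. In the nilpotent case $\exp$ is a global diffeomorphism, so the directions $X_\gamma=\log(g\gamma)$ are genuinely distinct as $\gamma$ ranges over $\Gamma$, and the noncommutativity of the group law forces the associated midpoints $\exp(X_\gamma/2)\Gamma$ to spread without accumulation, yielding the required locally finite family. In the semisimple case the abundance of hyperbolic and unipotent one-parameter subgroups, together with mixing of the associated flows, plays the same role. The solvable case --- of which \emph{Sol} is a representative and the subject of this paper --- is handled by exploiting the exponential distortion of the group: the orbits grow so fast that their intersections with any fixed finite set are controlled, and a direct algebraic count over the coset $g\Gamma$ produces the family. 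Finally, the density of non-blockable pairs follows from homogeneity: the left $G$-action is transitive on $M$ and carries connection curves to connection curves, so a single non-blockable pair can be translated and perturbed to fill a dense subset of $M\times M$.

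The main obstacle is precisely the uniformity of this program across \emph{all} connected Lie groups. There is no single argument: the exponential map may fail to be surjective or injective, lattices in solvable groups have intricate arithmetic structure, and the local-finiteness estimate --- that each point lies on only finitely many curves of the family --- must be re-established group-by-group. This is why the statement remains a conjecture in general, and why the present paper isolates and settles the single solvable geometry \emph{Sol}, where the metric $ds^2=e^{-2z}dx^2+e^{2z}dy^2+dz^2$ makes the exponential distortion explicit and the counting tractable.
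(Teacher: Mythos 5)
The statement you are addressing is Conjecture 2, which the paper does not prove---and cannot: it is an open problem, of which the paper settles only the special case $G=Sol$ (Theorems 1 and 2). Your proposal acknowledges this in its final paragraph, but the consequence is that what you have written is not a proof. The \emph{if} direction is correct in substance: for $G=\mathds{R}^n$ the connection curves joining $m_1$ to $m_2$ lift to the segments from $\tilde m_1$ to $\tilde m_2+\gamma$, $\gamma\in\Gamma$, and their midpoints fall into at most $[\Gamma:2\Gamma]=2^n$ classes mod $\Gamma$, giving a finite blocking set. One classical caveat you skip: a midpoint may coincide with $m_1$ or $m_2$ itself (e.g.\ the curves joining $m_0$ to $m_0$ coming from $\gamma\in 2\Gamma$), and since a blocking set must avoid the endpoints, such curves have to be blocked at other rational times; this is handled in the flat-manifold literature the paper cites.

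The \emph{only if} direction, which is the entire content of the conjecture, is a program in which every substantive step is asserted rather than proved. Concretely: (i) your local-finiteness criterion is a sensible strategy, but it is never established for a single non-abelian group---``noncommutativity forces the midpoints to spread without accumulation'' (nilpotent case), ``mixing of the associated flows plays the same role'' (semisimple case), and ``exponential distortion controls intersections'' (solvable case) are slogans, not arguments; each known partial result (Gutkin for nilmanifolds, the non-blockability of $\textrm{SL}(n,\mathds{R})/\Gamma$ which is known only for $\Gamma$ commensurable with $\textrm{SL}(n,\mathds{Z})$, and the present paper for $Sol$) required a long, case-specific proof of exactly the kind being waved at, and the semisimple case is still open for general lattices. (ii) The Levi--Mostow reduction is itself unjustified: a lattice in $G$ need not split compatibly with the decomposition (irreducible lattices in products are the standard obstruction), so non-blockability of quotients of the factors does not formally yield non-blockability of $G/\Gamma$. (iii) The closing density claim fails on dimensional grounds: the diagonal $G$-orbit of a single non-blockable pair is a submanifold of dimension $\dim G$ inside $M\times M$, which has dimension $2\dim G$, so ``translated and perturbed'' cannot produce density by homogeneity alone. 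The paper's actual route (for $Sol$) is different: it proves that the set of points non-blockable from the identity coset $m_0$ is dense in $M$, and then uses the equivalence that $(g_1\Gamma,g_2\Gamma)$ is non-blockable iff $g_1^{-1}g_2\Gamma$ is non-blockable from $m_0$, which converts a dense set in $M$ into a dense set of pairs in $M\times M$.
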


Gutkin in \cite{Connection blocking} establishes Conjecture 2 for nilmanifolds. He then proves the homogeneous space $\textrm{SL}(n,\mathds{R})/\textrm{SL}(n,\mathds{Z})$ is not \textit{midpoint blockable}. If $\Gamma$ is a lattice commensurable to $\textrm{SL}(n,\mathds{Z})$, the homogeneous spaces $\textrm{SL}(n,\mathds{R})/\Gamma$ is non-blockable \cite{Connection Blocking SLnR)}. To complete the proof of the conjecture for all solvable Lie group quotients, a good step is to start with $Sol$. We do this by proving the following theorem:

\begin{thm}\label{thm-Mn}
All quotients of $Sol$ are non-blockable. In particular, for every lattice $\Gamma$ in $Sol$, the set of non-blockable pairs is a dense subset of $Sol/\Gamma \times Sol/\Gamma$.
\end{thm}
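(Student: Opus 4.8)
\section*{Proof proposal}

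The plan is to lift everything to $Sol$ and reduce the blocking problem to a covering problem for line segments in the Lie algebra $\frg\cong\bbR^3$. First, since left translation by $Sol$ acts transitively on $M=Sol/\Gamma$ and carries one--parameter--subgroup orbits to one--parameter--subgroup orbits, blockability of $(m_1,m_2)$ depends only on the pair up to the diagonal $Sol$--action, so I may assume $m_1=\pi(e)$, where $\pi\colon Sol\to M$ is the projection. Because $Sol$ is completely solvable and simply connected, $\exp\colon\frg\to Sol$ is a diffeomorphism, and a direct computation with the group law $(\upsilon_1,z_1)(\upsilon_2,z_2)=(\upsilon_1+\phi(z_1)\upsilon_2,z_1+z_2)$, $\phi(z)=\mathrm{diag}(e^{z},e^{-z})$, gives an explicit formula for $\log$. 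Writing $g_2=(\xi,\zeta)$ and $\Gamma=\{(w,nt_0):w\in L,\ n\in\bbZ\}$ with $A=\phi(t_0)=\mathrm{diag}(\lambda,\lambda^{-1})$ preserving the lattice $L$, the connection curves from $\pi(e)$ to $m_2=\pi(g_2)$ are exactly $c_\gamma(t)=\pi(\exp(t\log(g_2\gamma)))$, $t\in[0,1]$, one for each $\gamma\in\Gamma$, and they are pairwise distinct since $\log$ is injective. A finite set $B=\{\pi(\beta_1),\dots,\pi(\beta_k)\}$ blocks the pair precisely when every open segment $R_\gamma=\{t\log(g_2\gamma):t\in(0,1)\}\subset\frg$ meets the countable \emph{obstacle set} $\Lambda=\bigcup_{j}\log(\beta_j\Gamma)$.

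The geometric heart of the reduction is to organize the segments by winding number. Fix a large $n$, so that $c=\zeta+nt_0>0$, and let $w$ range over $L$. The endpoints $\log(g_2(w,nt_0))$ all lie in the horizontal plane $\{z=c\}$ and, by the explicit $\log$--formula, fill out a full affine $2$--dimensional lattice there (the image of $L$ under an injective affine map with diagonal linear part). Every segment $R_{(w,n)}$ emanates from the origin, and distinct endpoints in a common plane $\{z=c\}$, $c>0$, determine distinct rays through the origin; hence \textbf{a single obstacle point of $\Lambda$ lies on at most one segment $R_{(w,n)}$.} The entire question for this $n$ therefore becomes: how many of the infinitely many segments $R_{(w,n)}$ can the finitely many cosets $\log(\beta_j\Gamma)$ meet?

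Next I would analyze one blocker at a time. The obstacle points of $\pi(\beta_j)$ split, according to the height $z^{*}=\beta_j^{z}+n't_0$, into affine $2$--dimensional lattices in the planes $\{z=z^{*}\}$, and only the finitely many levels with $z^{*}\in(0,c)$ (about $c/t_0\sim n$ of them) can meet any $R_{(w,n)}$. Collinearity with the origin forces the scaling $\lambda\mapsto (c/z^{*})\lambda$ from the plane $\{z=z^{*}\}$ to the plane $\{z=c\}$, which is diagonal; so the endpoints that get blocked by level $(j,n')$ are the intersection of the endpoint lattice with a $\mathrm{diag}(\rho_1,\rho_2)$--image of $\log(\beta_j\Gamma)$ at that level. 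Equivalently, the blocked $w$ are indexed by $L\cap(\rho^{-1}L+v)$ for an explicit diagonal $\rho=\mathrm{diag}(\rho_1,\rho_2)$ and a translation $v$ depending on $\xi,\beta_j$. Thus everything comes down to whether $\rho L$ and $L$ are commensurable.

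The commensurability dichotomy is where I expect the main difficulty, and it is exactly where the hyperbolic structure of $Sol$ and a transcendence hypothesis enter. Because $A=\mathrm{diag}(\lambda,\lambda^{-1})$ is irrational--hyperbolic, preserves $L$, and commutes with the diagonal $\rho$, any single nonzero vector of $L\cap\rho L$ propagates through its $A$--orbit to two independent such vectors; hence $\rho L\cap L$ is either $\{0\}$ or a full (rank--two) commensurability, with no rank--one case. A computation of $\rho_1=e^{-\zeta}(e^{c}-1)e^{\beta_j^{z}}/(e^{z^{*}}-1)$ shows that full commensurability would force $\rho_1$ to be algebraic, which with $e^{c}=e^{\zeta}\lambda^{n}$ and $e^{z^{*}}=e^{\beta_j^{z}}\lambda^{n'}$ compels the relation $e^{\beta_j^{z}-\zeta}=\lambda^{\,n-n'}$, i.e. exactly $z^{*}=c$; but $z^{*}=c$ places the obstacle at the endpoint height, where $t=z^{*}/c=1\notin(0,1)$, so no interior blocking occurs. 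Consequently, \emph{once $e^{\zeta}$ is transcendental}, $\rho L\cap L=\{0\}$ for every admissible level, the affine intersection $L\cap(\rho^{-1}L+v)$ has at most one point, and each $(j,n')$ blocks at most one curve. Summing, at most $\sim kn$ of the infinitely many segments $R_{(w,n)}$ are blocked, so infinitely many survive and the pair $(\pi(e),m_2)$ is non-blockable. Finally, translation invariance of the construction shows the operative condition is transcendence of $e^{\,z(m_2)-z(m_1)}$; since its failure set is countable while the fibre coordinate $\xi$ is unconstrained, the non-blockable pairs are dense in $M\times M$, which is Theorem~\ref{thm-Mn}.
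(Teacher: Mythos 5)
Your proposal takes a genuinely different route from the paper. The paper first shows every lattice is conjugate to a semidirect--product lattice with vertical third generator (Proposition \ref{latticeisomorphic}, Lemma \ref{linear-iso}), and then proves Theorem \ref{thm-main} by Gutkin's finite-coset criterion (Proposition \ref{prop:fcosets}): along the sequence $\gamma_i=(0,0,sr_i)$ the blocked points $(g\gamma_i)^{t_i}$ would all have to land in a single coset, irrationality of $\lambda$ forces them onto a vertical line, and an explicit computation of the $t_i$ gives $r_i=r_{i+1}$, a contradiction; the resulting non-blockable pairs are those with $g_1^{-1}g_2$ on the planes $x=0$ or $y=0$. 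You instead fix the winding number $n$, note that each obstacle point can block at most one segment $R_{(w,n)}$, reduce the set of blocked $w$ at each level to an affine intersection $L\cap(\rho^{-1}L+v)$, and rule out commensurability by a transcendence hypothesis on $e^{\zeta}$. This is a legitimately different mechanism (counting per winding number instead of a limiting argument), it produces a different dense set of non-blockable pairs (arbitrary $\xi$, co-countably many $\zeta$, hence a full-measure set rather than the paper's measure-zero set of coordinate planes), and the rank dichotomy for $L\cap\rho L$ is sound once you add the standard observation that $L$ contains no nonzero vector on a coordinate axis (otherwise $\lambda$ would be rational). One caveat: you assume $\Gamma=\{(w,nt_0)\}$ splits with purely vertical $\bbZ$-factor; a general lattice of $Sol$ has this form only up to conjugation, so you must invoke the analogue of Proposition \ref{latticeisomorphic} (harmless, since automorphisms of $Sol$ carry one-parameter subgroups to one-parameter subgroups, but it cannot be skipped).

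There is, however, a genuine flaw in the transcendence step as written. You argue: commensurability forces $\rho_1$ to be algebraic, and algebraicity of $\rho_1$ ``compels'' $e^{\beta_j^{z}-\zeta}=\lambda^{n-n'}$, i.e.\ $z^{*}=c$. The second implication is false. With $u=e^{\beta_j^{z}}$, $v=e^{\zeta}$ one has $\rho_1=\frac{u}{v}\cdot\frac{v\lambda^{n}-1}{u\lambda^{n'}-1}$, and a blocker can achieve $\rho_1=1$ (certainly algebraic) at a level with $z^{*}\neq c$: take any $n'>n$ and put the blocker at height $u=v/\bigl(1+v(\lambda^{n'}-\lambda^{n})\bigr)$; then $u>0$ and $1<u\lambda^{n'}<v\lambda^{n}$, so $z^{*}\in(0,c)$, and your chain as stated fails to exclude that level. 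The repair is to use the full strength of commensurability: if $\rho L\cap L$ has rank two, then $P^{-1}\rho P$ (where $L=P\bbZ^2$) maps a finite-index subgroup of $\bbZ^2$ into $\bbZ^2$, hence is a rational matrix, so \emph{both} eigenvalues $\rho_1,\rho_2$ are algebraic. Writing $T=(v\lambda^{n}-1)/(u\lambda^{n'}-1)$, you have $\rho_1=(u/v)\,T$ and $\rho_2=\lambda^{n'-n}T$; algebraicity of $\rho_2$ makes $T$ algebraic, algebraicity of $\rho_1$ then makes $\alpha=u/v$ algebraic, and substituting $u=\alpha v$ into the definition of $T$ gives $v\bigl(T\alpha\lambda^{n'}-\lambda^{n}\bigr)=T-1$, which for transcendental $v$ forces both sides to vanish: $T=1$, i.e.\ $z^{*}=c$, the contradiction you wanted. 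With this correction (and the conjugation remark above) your counting argument goes through and yields an alternative proof of Theorem \ref{thm-Mn}.
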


To prove this theorem we start with a more specific class of lattices in $Sol$; those that are isomorphic to $\mathds{Z}^2 \rtimes_A \mathds{Z}$. By $\mathds{Z}^2 \rtimes_A \mathds{Z}$ we mean the semidirect product where $A\in \textrm{SL}_2(\mathds{Z})$, and $r\in \mathds{Z}$ acts on $\mathds{Z}^2$ as $A^r$ so as the multiplication is given by $(p_1,q_1,r_1)(p_2,q_2,r_2)=((p_1,q_1)+A^{r_1}(p_2,q_2),r_1+r_2)$. 
If $P$ is the eigenvector matrix of $A$, by Proposition \ref{Z2semidpZlattice} the mapping $(p,q,r) \mapsto (P^{-1}(p,q),sr)$ embeds $\mathds{Z}^2 \rtimes_A \mathds{Z}$ into $Sol$ and the image is a lattice. We then solve the blocking problem for some of these lattices. In particular we prove:

\begin{thm}\label{thm-main}
Let $s\neq 0$ be a real number, $A \in \textrm{SL}_2(\mathds{Z})$ be a matrix with eigenvalues $\lambda=\pm e^s, \lambda^{-1}$, and $P \in \textrm{GL}_2(\mathds{R})$ be such that  $PAP^{-1}= \begin{pmatrix} \lambda & 0 \\
0 & \lambda^{-1}  \\
\end{pmatrix}$ with $P_{11}=P_{22}=1$. Let $\Gamma=\Gamma(A)=\{(P(p,q),sr)|p,q,r \in \mathds{Z}\}$ be the corresponding lattice in $Sol$. A pair of points $m_1=g_1\Gamma, m_2=g_2\Gamma \in Sol/\Gamma$ is not  blockable if $g_1^{-1}g_2$ lies on the planes $x=0, y\neq 0$  or 
$y=0,x\neq 0$.
\end{thm}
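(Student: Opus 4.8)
The plan is to use left-invariance to put the pair in a normal form, then confine the whole argument to a single totally geodesic hyperbolic plane sitting inside $Sol$ and exhibit an explicit infinite family of connecting curves there that no finite set can meet.

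\emph{Reduction.} Left translation by $g_1^{-1}$ is an isometry of $Sol$ that descends to $Sol/\Gamma$ and carries connecting curves to connecting curves, so I may assume $m_1=e\Gamma=\Gamma$ and $m_2=h\Gamma$ with $h=g_1^{-1}g_2$. The map $\sigma(x,y,z)=(y,x,-z)$ is an automorphism of $Sol$ interchanging the planes $\{x=0\}$ and $\{y=0\}$ and sending a lattice of the stated form to another one, so it suffices to treat $h=(x_0,0,z_0)$ with $x_0\neq 0$ (and we may take $s>0$). The set $H=\{(x,0,z)\}$ is a subgroup, and in the coordinates $w=x+ie^{z}$ it is the upper half plane with its hyperbolic metric; one checks that one–parameter subgroups based at $e$ are, in these coordinates, the Euclidean line segments through $i$, and that a connecting curve running to a target with zero $y$-component must itself have zero $y$-component, hence lie in $H$. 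Because $P$ diagonalises the hyperbolic matrix $A\in\mathrm{SL}_2(\mathbb{Z})$, its off-diagonal entries $P_{12},P_{21}$ are irrational, whence $\Gamma\cap H=\{(0,0,sr):r\in\mathbb{Z}\}$, a discrete vertical subgroup; the same irrationality gives $h\notin\Gamma$, so $m_1\neq m_2$.

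\emph{The family.} For each $r\in\mathbb{Z}$ the element $h(0,0,sr)=(x_0,0,z_0+sr)$ lies in $h\Gamma\cap H$, and the unique connecting curve $C_r$ running to it is the chord in $\mathbb{H}$ from $i$ to $x_0+ie^{z_0}e^{sr}$. Each $C_r$ projects to a connecting curve of $m_1$ and $m_2$, and there are infinitely many of them. Suppose $B$ were a finite blocking set. Its preimage is a finite union of cosets $b_i\Gamma$, and $H$ being a subgroup, $b_i\Gamma\cap H$ is either empty or a single coset of $\Gamma\cap H$; thus the preimage meets $H$ in finitely many vertical orbits $L_j=\{(X_j,0,Z_j+sn):n\in\mathbb{Z}\}$, which in the half plane are the points $(X_j,\,e^{Z_j}e^{sn})$ on the line $X=X_j$. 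Since the lift of $C_r$ lies entirely in $H$, blocking $C_r$ forces its chord through some point of some $L_j$.

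\emph{Crux.} The decisive step is that each $L_j$ meets only finitely many of the chords $C_r$, which I expect to be the main obstacle. A chord meets the line $X=X_j$ at an interior point only when $X_j$ lies strictly between $0$ and $x_0$, and then in a single point, at height $Y^{*}(r)=1+\tfrac{X_j}{x_0}(e^{z_0}e^{sr}-1)$, affine in $e^{sr}$. So $C_r$ is blocked by $L_j$ exactly when $Y^{*}(r)=e^{Z_j}e^{sn}$ for some $n\in\mathbb{Z}$, i.e.
\[ X_j e^{z_0}e^{sr}-x_0 e^{Z_j}e^{sn}=X_j-x_0 . \]
Because $X_j\neq x_0$ the right side is nonzero; if this equation had infinitely many solutions then $r-n$ would converge to $d:=\tfrac1s\log(x_0 e^{Z_j}/X_j e^{z_0})$, forcing $d\in\mathbb{Z}$, after which the left side factors as $e^{sn}(X_j e^{z_0}e^{sd}-x_0 e^{Z_j})=0$, contradicting the nonzero right side. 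Hence each $L_j$ blocks only finitely many $C_r$, the finitely many $L_j$ block only finitely many in total, and some $C_r$ survives, contradicting that $B$ blocks. Therefore no finite blocking set exists, and $\sigma$ disposes of the remaining case $x=0,\,y\neq 0$.
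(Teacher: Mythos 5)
Your route is, at bottom, the same as the paper's: reduce to blocking from the identity, take the infinite family of lifts $(x_0,0,z_0+sr)$, $r\in\mathbb{Z}$ (the paper uses $(0,y,z+sr_i)$ on the other coordinate plane), use irrationality of $\lambda$ to confine all candidate blocking points for these curves to finitely many vertical arithmetic progressions $\{(X_j,0,Z_j+s\mathbb{Z})\}$, and then rule out finite blocking by asymptotic analysis of an equation in $e^{sr}$ and $e^{sn}$. The packaging differs: you argue straight from the definition and count, for each coset $b_i\Gamma$, how many chords it can block, whereas the paper invokes Proposition \ref{prop:fcosets} and pigeonholes infinitely many blocked points into a single coset before extracting essentially the same exponential equation; your half-plane picture (one-parameter subgroups in $\{y=0\}$ as Euclidean chords through $i$) is a transparent reformulation of the paper's Corollary \ref{g^t for Sol}. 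These are genuine expository gains, and your setup (the subgroup $H$, the computation $\Gamma\cap H=\{(0,0,sr)\}$, the fact that $b_i\Gamma\cap H$ is a single coset of $\Gamma\cap H$, the interior-intersection formula $Y^*(r)$) is all correct, but it is not a different proof in substance.

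There is, however, a hole at your self-identified crux. You assert: ``if this equation had infinitely many solutions then $r-n$ would converge to $d$.'' That implication is only valid along solution sequences with $r\to+\infty$: dividing by $x_0e^{Z_j}e^{sn}$ yields $r-n\to d$ only once you know $e^{sn}\to\infty$, which follows from $X_je^{z_0}e^{sr}\to\pm\infty$. But an infinite solution set need not have $r\to+\infty$; since each $r$ determines at most one $n$, it only forces $|r|\to\infty$ along a subsequence, and the case $r\to-\infty$ behaves differently. There $X_je^{z_0}e^{sr}\to 0$, so the equation gives $e^{sn}\to(x_0-X_j)/(x_0e^{Z_j})$, a finite positive number; hence $n$ is eventually constant and $r-n\to-\infty$, so your claimed convergence is false in this regime. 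The conclusion still holds, but it needs its own (easy) argument: for a fixed value $n=n^*$, the map $r\mapsto X_je^{z_0}e^{sr}$ is injective, so each $n$ can serve at most one $r$, and an infinite family of solutions with bounded $n$ is impossible. Alternatively, restrict your family to $r\geq 1$ at the outset (still infinite), which makes $r\to+\infty$ automatic and your argument complete as written. Either repair is one line, but as stated the decisive step does not cover all cases. (Two further, truly minor, points: the case $\lambda=-e^s$ should be reduced to $\lambda>0$ by replacing $A$ with $-A$, as the paper does; and the degenerate chord with $z_0+sr=0$ is in fact covered by your formula, though it deserves a word.)
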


\begin{rmk}
The above theorem basically shows that if two points are on the planes $x=c$, or $y=c$, (not having the same $y$, or $x$, respectively) then their corresponding cosets in the quotient space are not blockable. Since these planes are isometric to the hyperbolic plane $\mathds{H}$, this theorem reflects an interesting problem in the hyperbolic plane. 
The left invariant metric of $Sol$ is $ds^2=e^{-2z}dx^2+e^{2z}dy^2+dz^2$. The isometries are given by $(c,y,t) \mapsto (y,e^{-t})$ and $(x,c,t) \mapsto (x,e^{t})$ \cite{Lattices in SOl}. This metric is not bi-invariant and the image of one parameter subgroups are not necessarily geodesics in the hyperbolic plane.They are simply lines passing through $(1,0)$. 
\end{rmk}

Interestingly, all lattices of $Sol$ are isomorphic to the semidirect product lattices presented in Theorem \ref{thm-main}, through which we can prove non-blockability of all quotients of $Sol$.
\vspace{0.1in}
\par
The organization of the paper is as follows. In Section 2, we briefly review connection blocking concept for homogeneous spaces; then we formulate one parameter subgroups in $Sol$. In Section 3, we introduce semi-direct product lattices in $Sol$, we also prove all lattices of this class are quasi-isometric. We then present a group presentation for all lattices in $Sol$ according to Moln\'ar \cite{Classification of Sol}; and finally we prove all lattices in $Sol$ are conjugate to semi-direct product lattices. In Section 4, we first prove a few technical lemmas, then proceed to prove the main theorems of the paper. 




\section{Preliminaries}

In this section we review general preliminaries of blocking properties in homogeneous spaces, and the tools needed to state and prove our main result. We follow the notation and discussion in \cite{Connection blocking}.

\subsection{Connection blocking in homogeneous spaces}
Let $G$ be a connected Lie group, $M=G/\Gamma$, $\Gamma \subset G$ a lattice. For $g \in G, m \in M$, $g \cdot m$ denotes the action of $G$ on $M$. Let $\mathfrak{G}$ be the Lie Algebra of $G$ and let $\exp : \mathfrak{G} \rightarrow G$ be the exponential map. For $m_1,m_2 \in M$ let $C_{m_1,m_2}$ be the set of parametrized curves $c(t)=\exp(tx)\cdot m, 0 \leq t \leq 1$, such that $c(0)=m_1, c(1)=m_2$. We say that $C_{m_1,m_2}$ is the collection of \textit{connecting curves} for the pair $m_1,m_2$. Let $I \subset \mathds{R}$ be any interval. If $c(t), t\in I$, is a curve, we denote by $c(I)\subset M$ the set $\{c(t): t\in I\}$. A \textit{finite set} $B \subset M \ \{m_1,m_2 \}$ is a \textit{blocking set} for the pair $m_1,m_2$ if for any curve $c$ in $C_{m_1,m_2}$ we have $c([0,1])\cup B \neq \emptyset$.
If a blocking set exists, the pair $m_1,m_2$ is \textit{connection blockable}, or simply \textit{blockable}. 
The analogy with Riemannian security \cite{Blocking of billiard,Blocking light,Blocking: new examples,Insecurity for compact: commentary} suggests the following:

\begin{dfn} Let $M=G/\Gamma$ be a homogeneous space.  
\end{dfn}
\begin{enumerate}[label=\roman*)]
\item  $M$ is \textit{connection blockable} if every pair of its points is blockable. If there exists at least one non-blockable pair of points in $M$, then $M$ is non-blockable. 
\item $M$ is \textit{uniformly blockable} if there exists $N \in \mathds{N}$ such that every pair of its points can be blocked with a set $B$ of cardinality at most $N$. The smallest such $N$ is the blocking number for $M$. 
\item A pair $m_1,m_2 \in M$ is \textit{midpoint blockable} if the set $\{ c(1/2) : c \in C_{m_1,m_2} \}$ is finite. A homogeneous space is midpoint blockable if all pairs of its points are midpoints blockable. 
\item A homogeneous space is \textit{totally non-blockable} if no pair of its points is blockable.
\end{enumerate}
\vspace{0.1in}
\par
Consider the homogeneous space $M=G/\Gamma$, where $\Gamma \subset G$ is a lattice.
$M$ carries some straightforward and expected blocking properties. In particular, it is clear from definitions that a pair of points $m_1=g_1\Gamma, m_2=g_2\Gamma$ is non-blockable if and only if $m=g_1^{-1}g_2\Gamma$ is not blockable from the identity $m_0=\Gamma$. Thus, $M$ is blockable (resp. uniformly blockable, midpoint blockable) if and only if all pairs $m_0,m$ are blockable (resp. uniformly blockable,midpoint blockable). The space $M$ is totally non-blockable if and only if no pair $m_0,m$ is blockable. 
\vspace{0.1in}
\par
We say homogeneous spaces $M_1,M_2$ have \textit{identical blocking property} if both are blockable (or not), midpoint blockable (or not), totally non-blockable (or not), etc. 
\vspace{0.1in}
\par
Let $\textrm{exp}:\mathfrak{G} \rightarrow G$ be the exponential map. For $\Gamma \subset G$ denote by $p_{\Gamma}:G \rightarrow G/\Gamma$ the projection, and set $\textrm{exp}_{\Gamma}=p_{\Gamma} \circ \textrm{exp}: \mathfrak{G} \rightarrow G/\Gamma$. We will say that a pair $(G,\Gamma)$ is of \textit{exponential type} if the map $\textrm{exp}_{\Gamma}$ is surjective. Let $M=G/\Gamma$. For $m \in M$ set $\textrm{Log}(m)=\textrm{exp}_{\Gamma}^{-1}(m)$. Note, $\textrm{Log}(m)$ may have more than one element. We will use the following basic fact to prove a point is not blockable from identity. See \cite{Connection blocking} Proposition 2 for the proof.

\begin{prop}\label{prop:fcosets}
Let $G$ be a Lie group, $\Gamma \subset G$ a lattice such that $(G,\Gamma)$ is of exponential type, and let $M=G/\Gamma$. 
Then $m\in M$ is blockable away from $m_0$ if and only if there is a map $x \mapsto t_x$ of $Log(m)$ to $(0,1)$ such that the set $\{\exp(t_x x): x \in Log(m) \}$ is contained in a finite union of $\Gamma$-cosets. 
\end{prop}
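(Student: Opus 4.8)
The plan is to translate blockability into the purely combinatorial statement that one can select a single interior point on each connecting curve so that only finitely many points of $M$ are used, and then prove the two implications. First I would fix the dictionary. Writing $m=g\Gamma$, a curve $c(t)=\exp(tx)\cdot m_0=\exp(tx)\Gamma$ satisfies $c(0)=m_0$ automatically, while $c(1)=m$ holds precisely when $\exp(x)\in g\Gamma$, that is, when $x\in\mathrm{Log}(m)$. Hence the connecting curves in $C_{m_0,m}$ are exactly the curves $c_x(t)=\exp(tx)\Gamma$ with $x\in\mathrm{Log}(m)$. Since the points of $M$ are the $\Gamma$-cosets, a point $h\Gamma$ lies on $c_x$ at an interior time iff $\exp(tx)\in h\Gamma$ for some $t\in(0,1)$, and because $c_x(0)=m_0$ and $c_x(1)=m$ are barred from any blocking set, a finite $B\subset M\setminus\{m_0,m\}$ blocks the pair iff every $c_x$ meets $B$ at an interior time. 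The crucial observation is that the condition "$\{\exp(t_x x):x\in\mathrm{Log}(m)\}$ lies in a finite union of $\Gamma$-cosets" says exactly that the checkpoints $c_x(t_x)$ form a finite subset of $M$.

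For the forward implication I would start from a blocking set $B=\{q_1,\dots,q_n\}$. For each $x\in\mathrm{Log}(m)$ the curve $c_x$ meets $B$, and since $c_x(0),c_x(1)\notin B$ the meeting occurs at some interior time; choose $t_x\in(0,1)$ with $c_x(t_x)\in B$. Fixing representatives $q_i=b_i\Gamma$, this gives $\exp(t_x x)\in\bigcup_{i=1}^n b_i\Gamma$, so the desired map $x\mapsto t_x$ exists. This direction is immediate, and it is the one used to certify non-blockability: to prove a pair is not blockable one shows that every choice of interior times $t_x$ forces infinitely many distinct checkpoints.

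For the converse, suppose a map $x\mapsto t_x$ is given whose checkpoints lie in cosets $b_1\Gamma,\dots,b_n\Gamma$. Then $F=\{b_1\Gamma,\dots,b_n\Gamma\}\subset M$ is finite and every $c_x$ meets $F$ at the interior time $t_x$, so $B:=F\setminus\{m_0,m\}$ is a finite candidate avoiding the endpoints. If no checkpoint equals $m_0$ or $m$, then $B=F$ meets every connecting curve at an interior time and we are done.

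The main obstacle is exactly the curves whose chosen checkpoint is $m_0$ or $m$, namely those $x$ with $\exp(t_x x)\in\Gamma$ or $\exp(t_x x)\in g\Gamma$, for which $c_x$ passes through an endpoint at an interior time and that point is unusable. The key structural fact I would exploit is that these are degenerate curves: if $\exp(t_x x)=\gamma\in\Gamma$ then for $t\ge t_x$ one computes $\exp(tx)\Gamma=\exp((t-t_x)x)\gamma\Gamma=\exp((t-t_x)x)\Gamma$, so $c_x(t)=c_x(t-t_x)$ on $[t_x,1]$; in particular $c_x$ also meets $m$ at the interior time $1-t_x$ and, on $[0,t_x]$, is a loop based at $m_0$. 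Thus such a curve is a reparametrized shorter connecting curve preceded by a loop, and I would handle it by reducing to the non-degenerate (loop-free) connecting curves: one reselects $t_x$, whenever possible, so that $c_x(t_x)\in F\setminus\{m_0,m\}$, keeping the checkpoint image inside $F$, and shows that the remaining curves meeting $F$ only at an endpoint can be absorbed by finitely many additional fixed points. Verifying that this reselection never leaves the finite coset set and that only finitely many extra points are ever required is the crux of the argument; the dictionary and the forward direction are routine by comparison.
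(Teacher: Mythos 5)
Your dictionary and your forward implication are correct and complete, and it is worth noting that this is the only direction the paper ever uses (non-blockability is certified via the contrapositive); the paper itself offers no argument at all for this proposition, deferring to Gutkin's Proposition~2, whose proof is essentially your dictionary with both directions treated as immediate. However, as a proof of the full equivalence your proposal has a genuine gap, and you point at it yourself: in the converse direction you never prove that the reselection procedure terminates, nor that the curves whose checkpoints stubbornly land on $m_0$ or $m$ ``can be absorbed by finitely many additional fixed points.'' As written, nothing rules out an infinite regress in which every reselected checkpoint again lies on $m_0$ or $m$, and the ``finitely many additional points'' claim is not formulated precisely enough to verify. Since blocking sets are required to avoid $\{m_0,m\}$, handling exactly these degenerate curves \emph{is} the content of the converse, so the proposal is incomplete where it matters.

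The gap can be closed, and more cheaply than your plan suggests: no additional points and no reselection are needed. Assume $m\neq m_0$ (the phrase ``away from $m_0$'' implicitly requires this; for $m=m_0$ the constant curve makes the statement false). For $x\in \mathrm{Log}(m)$ let $\sigma(x)\in(0,1]$ be the first time $c_x$ meets $\{m_0,m\}$; it exists because $\Gamma\cup g\Gamma$ is closed and discrete and the one-parameter subgroup of $x$ is nontrivial, so the hit-times form a closed subset of $(0,1]$ bounded away from $0$. The key observation is that this first hit is always at $m$, never at $m_0$: if $\exp(\sigma x)=\gamma\in\Gamma$ with $\sigma=\sigma(x)<1$, then $c_x(t+\sigma)=c_x(t)$ for all $t$ --- your own periodicity computation --- so writing $1=k\sigma+r$ with $r\in[0,\sigma)$ gives $m=c_x(1)=c_x(r)$, and $r\neq 0$ because $m\neq m_0$; this contradicts the minimality of $\sigma$. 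Hence $\rho(x):=\sigma(x)x$ lies in $\mathrm{Log}(m)$ and is \emph{irreducible}: $c_{\rho(x)}$ meets $\{m_0,m\}$ at no interior time. Now apply the \emph{given} map to irreducible elements: for irreducible $y$ the checkpoint $c_y(t_y)$ lies in $F=\{b_1\Gamma,\dots,b_n\Gamma\}$ but not in $\{m_0,m\}$, hence in $B:=F\setminus\{m_0,m\}$. Since $c_{\rho(x)}(t)=c_x(\sigma(x)t)$ is the initial segment of $c_x$, the point $c_{\rho(x)}\bigl(t_{\rho(x)}\bigr)\in B$ lies on $c_x$ at the interior time $\sigma(x)\,t_{\rho(x)}\in(0,1)$. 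So the single finite set $B$ blocks every connecting curve, completing the converse and repairing your argument.
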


\subsection{\textit{Sol} and One parameter subgroups}

In this section we derive an explicit formula for one parameter subgroups in $Sol$, which is  essential to study its blocking properties. 

\begin{dfn}
By $Sol$ we mean the Lie group $\mathds{R}^2\rtimes \mathds{R}$ where $z\in\mathds{R}$
acts on $\mathds{R}^2$ as $\begin{pmatrix} e^z & 0 \\
0 & e^{-z}  \\
\end{pmatrix}$, so as multiplication is given by $(x_1,y_1,z_1)(x_2,y_2,z_2)=(x_1+e^zx_2,y_1+e^{-z}y_2,z_1+z_2)$, together with a left invariant Riemannian metric
$ds^2=e^{-2z}dx^2+e^{2z}dy^2+dz^2$.
\end{dfn}

Consider the three curves $\mathds{R}\rightarrow Sol$ given by $\gamma_1:t \mapsto (t,0,0)$, $\gamma_2:t \mapsto (0,t,0)$ and $\gamma_3:t \mapsto (0,0,t)$. These have tangent vectors $\frac{\partial \gamma_1}{\partial t}=\frac{\partial}{\partial x}$,
$\frac{\partial \gamma_2}{\partial t}=\frac{\partial}{\partial y}$ and 
$\frac{\partial \gamma_3}{\partial t}=\frac{\partial}{\partial z}$ at $(0,0,0)$ respectively, and these vectors span the tangent space at that point. The left action of the group on these vectors gives a collection of three invariant vector fields $X_1, X_2$ and $X_3$ which form a basis for the tangent space at each point. $(x,y,z)\gamma_1 \mapsto (x+e^zt,y,z)$ so $X_1(x,y,z)=\frac{\partial}{\partial t}{ (x,y,z)\gamma_1} \mid_{t=0} = e^z \frac{\partial}{\partial x}$, Similarly, $(x,y,z)\gamma_2 \mapsto (x,y+e^{-z}t,z)$ so $X_2(x,y,z)=\frac{\partial}{\partial t}{ (x,y,z)\gamma_2} \mid_{t=0} = -e^{-z} \frac{\partial}{\partial y}$ and $(x,y,z)\gamma_3 \mapsto (x,y,z+t)$ so $X_3(x,y,z)=\frac{\partial}{\partial t}{ (x,y,z)\gamma_3} \mid_{t=0} = \frac{\partial}{\partial z}$. We construct the metric to be orthogonal at every point with respect to these vector fields. Thus $(\frac{\partial}{\partial x}\mid_{(x,y,z)},\frac{\partial}{\partial x}\mid_{(x,y,z)})=(e^{-z}X_1(x,y,z),e^{-z}X_1(x,y,z))= e^{-2z}$, $(\frac{\partial}{\partial y}\mid_{(x,y,z)},\frac{\partial}{\partial y}\mid_{(x,y,z)})=(-e^{z}X_2(x,y,z),-e^{z}X_2(x,y,z))= e^{2z}$ and $(\frac{\partial}{\partial z}\mid_{(x,y,z)},\frac{\partial}{\partial z}\mid_{(x,y,z)})=(X_3(x,y,z),X_3(x,y,z))= 1$ and so we obtain the metric given above. 
\vspace{0.1in}
\par
Let $\mathfrak{Sol}$ denote the Lie algebra of left invariant vector fields in $Sol$, together with the basis $X_1,X_2,X_3$ as above. We have the following proposition:

\begin{prop}
The exponential map of $Sol$ is given as the following: Given any vector $X=a_1X_1+a_2X_2+a_3X_3 \in \mathfrak{Sol}$, $\exp(tX)=(\frac{a_1}{a_3}(e^{a_3t}-1),\frac{a_2}{a_3}(e^{-a_3t}-1),a_3t)$, if $a_3\neq 0$. If $a_3=0$,
$\exp(tX)=(a_1t,-a_2t,0)$.
\end{prop}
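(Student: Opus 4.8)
The plan is to use the standard characterization of the exponential map: for a left-invariant vector field $X$, the curve $t\mapsto\exp(tX)$ is the unique integral curve of $X$ passing through the identity at $t=0$, equivalently the unique one-parameter subgroup $c:\mathds{R}\to Sol$ with $c(0)=e$ and $\dot c(0)=X$. So I would write $c(t)=(x(t),y(t),z(t))$ and translate the defining condition $\dot c(t)=X(c(t))$ into a system of ODEs, using the explicit expressions $X_1=e^{z}\frac{\partial}{\partial x}$, $X_2=-e^{-z}\frac{\partial}{\partial y}$, $X_3=\frac{\partial}{\partial z}$ for the left-invariant fields derived just above.

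First I would evaluate $X=a_1X_1+a_2X_2+a_3X_3$ along $c(t)$, which gives the tangent vector $a_1e^{z(t)}\frac{\partial}{\partial x}-a_2e^{-z(t)}\frac{\partial}{\partial y}+a_3\frac{\partial}{\partial z}$ at the point $c(t)$. Matching coordinates with $\dot c(t)=\dot x\,\frac{\partial}{\partial x}+\dot y\,\frac{\partial}{\partial y}+\dot z\,\frac{\partial}{\partial z}$ yields the decoupled system $\dot z=a_3$, $\dot x=a_1e^{z}$, $\dot y=-a_2e^{-z}$, subject to $c(0)=(0,0,0)$. Solving $\dot z=a_3$ first gives $z(t)=a_3t$; substituting into the remaining two equations reduces the problem to two elementary integrations.

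For $a_3\neq 0$, integrating $\dot x=a_1e^{a_3t}$ and $\dot y=-a_2e^{-a_3t}$ from $0$ to $t$ produces $x(t)=\frac{a_1}{a_3}(e^{a_3t}-1)$ and $y(t)=\frac{a_2}{a_3}(e^{-a_3t}-1)$, which is precisely the claimed formula. For $a_3=0$ the field no longer depends on $z$ (since $z(t)\equiv 0$), so $\dot x=a_1$, $\dot y=-a_2$ integrate directly to $x(t)=a_1t$, $y(t)=-a_2t$; this is exactly the $a_3\to 0$ limit of the first formula and explains why the statement records it as a separate case.

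There is no genuine obstacle here; the only points requiring care are (i) recognizing that $\exp(tX)$ is the integral curve of the \emph{left-invariant} field, so $X$ must be evaluated at the moving point $c(t)$ rather than at the identity, which is what makes the $e^{\pm z}$ factors appear; and (ii) keeping the sign in $X_2=-e^{-z}\frac{\partial}{\partial y}$ consistent so that the $y$-component comes out as $\frac{a_2}{a_3}(e^{-a_3t}-1)$ rather than its negative. As a consistency check I would verify directly that the resulting curve satisfies the homomorphism property $c(s+t)=c(s)c(t)$ under the group law $(x_1,y_1,z_1)(x_2,y_2,z_2)=(x_1+e^{z_1}x_2,y_1+e^{-z_1}y_2,z_1+z_2)$; this confirms it is genuinely a one-parameter subgroup with the prescribed initial velocity, and hence equals $\exp(tX)$ by uniqueness.
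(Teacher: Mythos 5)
Your proposal is correct and follows essentially the same route as the paper: both identify $\exp(tX)$ with the integral curve through the identity of the left-invariant field $a_1e^{z}\frac{\partial}{\partial x}-a_2e^{-z}\frac{\partial}{\partial y}+a_3\frac{\partial}{\partial z}$, reduce to the system $\dot x=a_1e^{z},\ \dot y=-a_2e^{-z},\ \dot z=a_3$ with initial condition $(0,0,0)$, and integrate. The only difference is that you write out the elementary integrations and the one-parameter-subgroup consistency check explicitly, which the paper leaves as "easily solved."
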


\begin{proof}
Let $\gamma(t)=(x(t),y(t),z(t))$ be the integral curve to $X$ so that $\gamma'(t)=X(t)=a_1e^z\frac{\partial}{\partial x}-a_2e^{-z}\frac{\partial}{\partial y}+a_3\frac{\partial}{\partial z}$. This leads to the first order system $x'(t)=a_1e^{z(t)},\, y'(t)=-a_2e^{-z(t)},\, z'(t)=a_3,\ \gamma(0)=(0,0,0)$ which can be easily solved giving the exponential formula. 
\end{proof}

\begin{rmk}
For every $g\in Sol$, the exponential map formula shows that the equation $\exp(X)=g$ has a unique solution. Let $g^t=\exp(t\log g)$ be the unique one parameter subgroup joining identity and $g$. A direct computation gives us the following corollary. 
\end{rmk}

\begin{cor}\label{g^t for Sol}
$g^t=\left(\frac{x}{e^z-1}(e^{tz}-1),\frac{y}{e^{-z}-1}(e^{-tz}-1),tz\right)$ for $g=(x,y,z) \in Sol,\  z\neq 0$. If $g=(x,y,0)$, $g^t=(tx,-ty,0)$.
\end{cor}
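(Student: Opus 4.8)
The plan is to compute the one-parameter subgroup $g^{t}=\exp(t\log g)$ in two steps: first invert the exponential map to identify $\log g=a_{1}X_{1}+a_{2}X_{2}+a_{3}X_{3}\in\mathfrak{Sol}$, and then substitute these coefficients back into the formula of the preceding proposition with the time parameter $t$ reinstated. The Remark guarantees that $\exp(X)=g$ has a unique solution, so to determine $\log g$ it suffices to exhibit one triple $(a_{1},a_{2},a_{3})$ solving it.

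Assume first that $z\neq 0$. Setting $t=1$ in the exponential formula and matching third coordinates forces $a_{3}=z\neq 0$, so the nondegenerate branch applies; matching the first two coordinates then gives $\tfrac{a_{1}}{z}(e^{z}-1)=x$ and $\tfrac{a_{2}}{z}(e^{-z}-1)=y$, whence $a_{1}=xz/(e^{z}-1)$ and $a_{2}=yz/(e^{-z}-1)$. Feeding $(a_{1},a_{2},a_{3})$ into $\exp(tX)=\bigl(\tfrac{a_{1}}{a_{3}}(e^{a_{3}t}-1),\tfrac{a_{2}}{a_{3}}(e^{-a_{3}t}-1),a_{3}t\bigr)$ and using $a_{3}=z$ to simplify the ratios $a_{1}/a_{3}=x/(e^{z}-1)$ and $a_{2}/a_{3}=y/(e^{-z}-1)$ produces exactly the claimed triple $\bigl(\tfrac{x}{e^{z}-1}(e^{tz}-1),\tfrac{y}{e^{-z}-1}(e^{-tz}-1),tz\bigr)$. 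A sanity check is that at $t=1$ this returns $(x,y,z)=g$ and at $t=0$ it returns the identity, which also confirms that the chosen $\log g$ is the correct preimage.

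The only delicate point is the degenerate stratum $z=0$, where the coefficients above take the indeterminate form $0/0$ and the nondegenerate branch is singular; I expect this to be the sole obstacle. It is handled by repeating the inversion on the $a_{3}=0$ branch: from $\exp(X)=(a_{1},-a_{2},0)=(x,y,0)$ one reads off $a_{1}=x$ and $a_{2}=-y$, so that $\exp(tX)=(a_{1}t,-a_{2}t,0)$ gives $g^{t}=(tx,ty,0)$. This also matches the continuous extension of the generic formula, since $\tfrac{x}{e^{z}-1}(e^{tz}-1)\to tx$ and $\tfrac{y}{e^{-z}-1}(e^{-tz}-1)\to ty$ as $z\to 0$, and both routes are consistent with the constraint $g^{1}=g$, which pins down the sign of the second coordinate.
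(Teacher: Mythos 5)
Your derivation is correct and is essentially the ``direct computation'' the paper itself invokes (the Remark preceding the corollary): invert the exponential formula at $t=1$ to identify $\log g$, then substitute the coefficients back into $\exp(tX)$. For $z\neq 0$ your computation reproduces the stated formula exactly.

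However, your degenerate case exposes a genuine error in the statement as printed: you derive $g^t=(tx,ty,0)$, while the corollary asserts $g^t=(tx,-ty,0)$. You are right and the printed formula is wrong, for exactly the reason you note — at $t=1$ the printed formula returns $(x,-y,0)\neq g$ whenever $y\neq 0$, so it cannot be the one-parameter subgroup through $g$. The sign slip originates in the preliminaries, where $X_2$ is recorded as $-e^{-z}\,\partial/\partial y$ even though $\frac{\partial}{\partial t}(x,y+e^{-z}t,z)\big|_{t=0}=e^{-z}\,\partial/\partial y$; the proposition's $a_3=0$ branch $\exp(tX)=(a_1t,-a_2t,0)$ is internally consistent with that flipped convention, but then solving $\exp(X)=(x,y,0)$ forces $a_2=-y$ rather than $a_2=y$, so the two signs cancel and one lands on $(tx,ty,0)$ under either convention — as your limit $z\to 0$ of the generic formula independently confirms. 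The discrepancy is harmless for the paper's main results, since the proof of Theorem \ref{thm-main} always replaces the representative $g$ by one with $z\neq 0$ and only uses the nondegenerate branch of the corollary, but the $z=0$ clause of the statement should be corrected to $g^t=(tx,ty,0)$.
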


\section{Lattices in $Sol$}
A complete classification of lattices in $Sol$ is presented in \cite{Classification of Sol}. In this paper, $Sol$ lattices are classified in an algorithmic way into 17 different types, but infinitely many $Sol$ affine equavalence classes, in each type. For the purpose of the blocking problem, we consider a class of lattices constructed by the following proposition. We then prove, every lattice in $Sol$ is conjugate to a lattice in this class. 

\begin{prop}\label{Z2semidpZlattice}
Let $A \in \textrm{SL}_2(\mathds{Z})$. Suppose that $A$ is conjugate in $\textrm{GL}_2(\mathds{R})$ to a matrix of the form $\begin{pmatrix} \lambda & 0 \\
0 & \lambda^{-1}  \\
\end{pmatrix}$ for some positive $\lambda \neq 1$. Then there is a quasi-isometric embedding 
$\mathds{Z}^2 \rtimes_A \mathds{Z}\hookrightarrow Sol$ and the image is a lattice. In particular if $A$ and $B$ are both matrices of the above form, then $\mathds{Z}^2 \rtimes_A \mathds{Z}$ is quasi-isometric to $\mathds{Z}^2 \rtimes_B \mathds{Z}$.
\end{prop}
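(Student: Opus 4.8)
The plan is to write down the embedding explicitly, check that it is an injective homomorphism by a single matrix identity, verify discreteness and cocompactness directly, and finally read off the quasi-isometry statements from the \v{S}varc--Milnor lemma.

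First I would fix the diagonalizing matrix. Write $\lambda=e^{s}$ (so $s=\log\lambda\neq 0$, since $\lambda>0$ and $\lambda\neq 1$) and choose $P\in\textrm{GL}_2(\mathds{R})$ with $PAP^{-1}=D:=\begin{pmatrix}\lambda&0\\0&\lambda^{-1}\end{pmatrix}$. Define $\phi:\mathds{Z}^2\rtimes_A\mathds{Z}\to Sol$ by $\phi(p,q,r)=(P(p,q),sr)$. Writing $E(z)=\begin{pmatrix}e^{z}&0\\0&e^{-z}\end{pmatrix}$ for the $Sol$-action, the homomorphism condition reduces to the single identity $PA^{r}P^{-1}=E(sr)$, which holds because $PAP^{-1}=D=E(s)$ and hence $PA^{r}P^{-1}=E(s)^{r}=E(sr)$; this is exactly the compatibility needed to match the twisted multiplication of $\mathds{Z}^2\rtimes_A\mathds{Z}$ with that of $Sol$. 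Injectivity is immediate: $\phi(p,q,r)=(0,0,0)$ forces $P(p,q)=0$ and $sr=0$, so $(p,q)=0$ and $r=0$ because $P$ is invertible and $s\neq 0$.

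Next I would show the image $\Gamma$ is a lattice. As a subset of $Sol\cong\mathds{R}^3$ the image is the product $P\mathds{Z}^2\times s\mathds{Z}$, which is discrete because $P\mathds{Z}^2$ is a lattice in $\mathds{R}^2$ and $s\neq 0$. For cocompactness, the point is that $D$ preserves $P\mathds{Z}^2$: from $PAP^{-1}=D$ we get $DP=PA$, so $D(P\mathds{Z}^2)=P(A\mathds{Z}^2)=P\mathds{Z}^2$ since $A\in\textrm{SL}_2(\mathds{Z})$ permutes $\mathds{Z}^2$. Consequently the projection $(w,z)\mapsto z$ exhibits $Sol/\Gamma$ as a bundle over the circle $\mathds{R}/s\mathds{Z}$ whose fiber over $z$ is the torus $\mathds{R}^2/E(z)P\mathds{Z}^2$, of constant area $|\det(E(z)P)|=|\det P|$ (because $\det E(z)=1$). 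Thus $Sol/\Gamma$ is the mapping torus of the toral automorphism induced by $A$, it is compact, and the covolume equals $\int_0^s|\det P|\,dz=s|\det P|<\infty$. Hence $\Gamma$ is a (cocompact) lattice in $Sol$.

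Finally, the quasi-isometry statements follow formally. Since $\mathds{Z}^2\rtimes_A\mathds{Z}$ is finitely generated it carries a word metric well defined up to quasi-isometry, and $\phi$ is a group isomorphism onto $\Gamma$, hence a quasi-isometry for word metrics. Because $\Gamma$ is cocompact and acts properly discontinuously by left translations (isometries) on the proper geodesic space $Sol$, the \v{S}varc--Milnor lemma identifies $\Gamma$ (word metric) with $Sol$ (left-invariant Riemannian metric) up to quasi-isometry via an orbit map; composing gives the desired quasi-isometric embedding $\mathds{Z}^2\rtimes_A\mathds{Z}\hookrightarrow Sol$ (in fact a quasi-isometry onto $Sol$). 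For the last assertion, if $A$ and $B$ are both of the stated form, then $\mathds{Z}^2\rtimes_A\mathds{Z}$ and $\mathds{Z}^2\rtimes_B\mathds{Z}$ are each quasi-isometric to the same group $Sol$, so they are quasi-isometric to one another by transitivity. I expect the only real content to be the cocompactness step --- constructing the torus-bundle fundamental domain and computing the finite covolume --- while the homomorphism, injectivity, and discreteness are routine; the quasi-isometry conclusions are then a direct appeal to \v{S}varc--Milnor, the one subtlety being to record that $\mathds{Z}^2\rtimes_A\mathds{Z}$ is finitely generated so that its quasi-isometry type is meaningful.
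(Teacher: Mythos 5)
Your proposal is correct and follows essentially the same route as the paper: the same explicit embedding $(p,q,r)\mapsto(P(p,q),sr)$, the same matrix-identity homomorphism check, cocompactness via the $\mathds{T}^2$-bundle-over-$S^1$ picture, and the \v{S}varc--Milnor lemma for the quasi-isometry statements. The only real difference is cosmetic: you verify proper discontinuity via discreteness of the image $P\mathds{Z}^2\times s\mathds{Z}$ (plus a covolume computation), whereas the paper instead proves properness by the metric displacement bound $d(g,\gamma g)\geq \min\{s,\mu K\}>0$; both are routine and serve the same purpose.
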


Note that by $\mathds{Z}^2 \rtimes_A \mathds{Z}$ we mean the semidirect product where $r\in \mathds{Z}$ acts on $\mathds{Z}^2$ as $A^r$ so as the multiplication is given by $(p_1,q_1,r_1)(p_2,q_2,r_2)=((p_1,q_1)+A^{r_1}(p_2,q_2),r_1+r_2)$.

\begin{proof}
By assumption there exists $P \in \textrm{GL}_2(\mathds{R})$ such that $PAP^{-1}=\begin{pmatrix} \lambda & 0 \\
0 & \lambda^{-1}  \\
\end{pmatrix}$, and $s \in \mathds{R} \setminus \{0\}$ such that $\begin{pmatrix} \lambda & 0 \\
0 & \lambda^{-1}  \\
\end{pmatrix}=\begin{pmatrix} e^s & 0 \\
0 & e^{-s}  \\
\end{pmatrix}$. Define the embedding by $(p,q,r) \mapsto \left( P(p,q),sr\right)$
and note that since $s \neq 0$, and $P$ is nonsingular this is an injection. The following calculation demonstrate that this gives a homomorphism:
$(p_1,q_1,r_1)(p_2,q_2,r_2)=((p_1,q_1)+A^{r_1}(p_2,q_2),r_1+r_2)\mapsto \left(P(p_1,q_1)+PA^{r_1}(p_2,q_2),s(r_1+r_2) \right)=( P(p_1,q_1)+\begin{pmatrix} e^s & 0 \\
0 & e^{-s}  \\
\end{pmatrix}^{r_1}P(p_2,q_2),sr_1+sr_2)=( P(p_1,q_1)+\begin{pmatrix} e^{sr_1} & 0 \\
0 & e^{-sr_1}  \\
\end{pmatrix}P(p_2,q_2),sr_1+sr_2)=\left(P(p_1,q_1),sr_1\right)\left(P(p_2,q_2),sr_2\right)$ The quotient of $Sol$ by $\mathds{Z}^2 \rtimes_A \mathds{Z}$ is a $\mathds{T}^2$ bundle over $S^1$ so is compact. Thus $\mathds{Z}^2 \rtimes_A \mathds{Z}$ is indeed a lattice in $Sol$. We now show that the action of $\mathds{Z}^2 \rtimes_A \mathds{Z}$ on $Sol$ is proper. Let $g=(X,Y,Z) \in Sol$ and let $\gamma=(p,q,r) \in \mathds{Z}^2 \rtimes_A \mathds{Z}\setminus \{1\}$. Then $\gamma g=(P(p,q)+(e^rX,e^{-r}Y),sr+Z)$.
If $r\neq 0$ then $d(g,\gamma g)\geq |s| \geq 0$. If $r=0$ then $\gamma g=(P(p,q)+(X,Y),Z)$ and both $g$ and $\gamma g$ lie in the same horizontal plane $z=Z$ on which the metric restricts to $ds^2=e^{-2Z}dx^2+e^{2Z}dy^2+dz^2$. In this case let $\mu =\min\{e^{-2Z},e^{2Z}\}>0$ and let $K=\inf_{\| (x,y) \|_{2}=1} \| P(x,y) \|_{2}>0$. Then $d(g,\gamma g) \geq \mu K\| (p,q) \|_{2}$ and since $\gamma \neq 1$, $(p,q)\neq (0,0)$ 
so $d(g, \gamma g)\geq \mu K$. We have thus shown that for all $\gamma \in \mathds{Z}^2 \rtimes_A \mathds{Z}$ with $\gamma \neq 1$, $d(g, \gamma g)\geq \min \{s,\mu K\}>0$. Hence the action of $\mathds{Z}^2 \rtimes_A \mathds{Z}$ on $Sol$ is proper. Since the action is also cocompact the Svarc-Milnor Lemma says that the embedding is a quasi-isometry.
\end{proof}

$Sol$ multiplication can be projectively interpreted by "left translations" on its points as $L_{\tau}: (x,y,z) \mapsto \tau(x,y,z),\, \tau \in Sol$. Let $L(T)$ denote the set of left translations on $Sol$ and assume $\Gamma<L(T)$ is a subgroup, generated by three independent translations $\tau_1=(x_1,y_1,z_1), \tau_2=(x_2,y_2,z_2), \tau_3=(x_3,y_3,z_3)$ with non-commutative addition, or in this case $\mathds{Z}$ linear combinations. The concept of a lattice can be rephrased as a subgroup of left translations. The theorem below clarifies the algebraic structure of lattices in $Sol$ (see \cite{Classification of Sol}):

\begin{thm}\label{alllattices}
Each lattice $\Gamma$ of $Sol$ has a groups presentation 
$$\Gamma=\Gamma(A)=\langle\tau_1,\tau_2,\tau_3: [\tau_1,\tau_2]=1, \tau_3^{-1}\tau_1\tau_3=\tau_1A^P, \tau_3^{-1}\tau_2\tau_3=\tau_2A^P \rangle,$$
where $[\tau_1,\tau_2]$ denotes the commutator $\tau_1^{-1}\tau_2^{-1}\tau_1\tau_2$, and 
$A=\begin{pmatrix} a & b \\
c & d  \\
\end{pmatrix} \in \textrm{SL}_2(\mathds{Z})$ with $\textrm{tr}(A)>2$, $\tau_1=(x_1,y_1,z_1), \tau_2=(x_2,y_2,z_2)$ satisfy the equalities $z_1=z_2=0$, and the matrix 
$P=\begin{pmatrix} x_1 & y_1 \\
x_2 & y_2  \\
\end{pmatrix} \in \textrm{GL}_2(\mathds{R}$ satisfies: $P^{-1}A P:= A^P =\begin{pmatrix} e^{z_3} & 0 \\
0 & e^{-z_3}  \\
\end{pmatrix}$ that is just a hyperbolic rotation fixed by the component $z_3$ in $\tau_3$ above. 
\end{thm}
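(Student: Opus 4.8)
The plan is to read the presentation off the semidirect-product structure $Sol=\mathds{R}^2\rtimes\mathds{R}$, writing $N=\mathds{R}^2\times\{0\}$ for the (abelian) nilradical and $\pi\colon Sol\to\mathds{R}$ for the projection to the $z$-coordinate. First I would invoke the structure theory of lattices in simply connected solvable Lie groups to conclude that $\Lambda:=\Gamma\cap N$ is a cocompact lattice in $N$, hence free abelian of rank two; choosing generators $\tau_1=(x_1,y_1,0)$ and $\tau_2=(x_2,y_2,0)$ accounts for the normalization $z_1=z_2=0$ and for the nonsingular matrix $P$ assembled from their horizontal coordinates.

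Second, I would examine $\pi(\Gamma)$. Since $N/\Lambda$ is a compact torus and $Sol/\Gamma$ is compact, the image $\pi(\Gamma)$ is a discrete cocompact subgroup of $\mathds{R}$, so $\pi(\Gamma)=z_3\mathds{Z}$ for some $z_3>0$ after possibly inverting a generator. Fixing $\tau_3\in\Gamma$ with $\pi(\tau_3)=z_3$ yields the third generator, and the exact sequence $1\to\Lambda\to\Gamma\to z_3\mathds{Z}\to 1$ gives $\Gamma=\langle\tau_1,\tau_2,\tau_3\rangle$.

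Third comes the heart of the matter, the relations. As $N$ is normal in $Sol$ and $\Lambda=\Gamma\cap N$, the subgroup $\Lambda$ is normal in $\Gamma$, and being abelian it satisfies $[\tau_1,\tau_2]=1$. Conjugation by $\tau_3$ restricts to an automorphism of $\Lambda$ which, as a linear map of $N$, is exactly $\mathrm{diag}(e^{z_3},e^{-z_3})$ --- a one-line computation from the $Sol$ multiplication law. Expressing this automorphism in the integral basis $\tau_1,\tau_2$ gives an integer matrix $A$ whose inverse is also integral and whose determinant equals $\det\mathrm{diag}(e^{z_3},e^{-z_3})=1$, so $A\in\mathrm{SL}_2(\mathds{Z})$; by construction $P^{-1}AP=\mathrm{diag}(e^{z_3},e^{-z_3})$. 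Because these eigenvalues are positive with $z_3>0$ we obtain $\mathrm{tr}(A)=e^{z_3}+e^{-z_3}>2$, which simultaneously records that $A$ is hyperbolic and explains why no negative-trace matrices arise: the $Sol$ conjugation always acts with positive eigenvalues.

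Finally I would confirm that these relations form a \emph{complete} presentation rather than a list of valid identities, by noting that they identify the abstract group with $\mathds{Z}^2\rtimes_A\mathds{Z}$ and that the latter is exactly the concrete lattice $\{(P(p,q),z_3 r)\}$ of Proposition \ref{Z2semidpZlattice}. I expect the main obstacle to be the very first step: showing that $\Gamma\cap N$ is a \emph{full}-rank lattice in $N$ (rather than a subgroup of rank $0$ or $1$). Once the short exact sequence $1\to\mathds{Z}^2\to\Gamma\to\mathds{Z}\to 1$ with the prescribed monodromy $A$ is secured, the remaining steps are bookkeeping; it is precisely the full-rank statement that requires the solvable-lattice structure theory (or an argument specific to $Sol$ exploiting the exponential distortion of the $\mathds{R}^2$-directions).
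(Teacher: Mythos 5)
Your proof is correct in its essentials, but it is not a reconstruction of anything in the paper: the paper contains no proof of Theorem \ref{alllattices} at all, quoting it (with conventions readjusted) from Moln\'ar--Szirmai \cite{Classification of Sol}. So you have supplied an argument where the paper has only a citation, and your route is the standard structure-theoretic one. Its single essential input is precisely the step you flagged: that $\Lambda=\Gamma\cap N$ is a rank-two lattice in the nilradical $N=\mathds{R}^2\times\{0\}$. That is Mostow's theorem on lattices in solvable Lie groups (see Raghunathan, \emph{Discrete Subgroups of Lie Groups}, Chapter III), and it should be cited as such rather than invoked anonymously; the same goes for two smaller facts you use along the way, namely that lattices in connected solvable Lie groups are automatically cocompact (you use this when you assert $Sol/\Gamma$ is compact) and that, once $\Gamma\cap N$ is a lattice in $N$, the image $\pi(\Gamma)$ in $Sol/N\cong\mathds{R}$ is discrete --- discreteness needs the standard approximation argument, not merely the two compactness statements you offer --- and note also that $\pi(\Gamma)\neq\{0\}$, since a subgroup of $N$ has infinite covolume in $Sol$. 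Granting these, the rest of your argument is sound: conjugation by $\tau_3=(x_3,y_3,z_3)$ acts on $N$ as $\textrm{diag}(e^{z_3},e^{-z_3})$, hence its matrix $A$ in the basis $\tau_1,\tau_2$ lies in $\textrm{SL}_2(\mathds{Z})$ with $P^{-1}AP=\textrm{diag}(e^{z_3},e^{-z_3})$ and $\textrm{tr}(A)=e^{z_3}+e^{-z_3}>2$, and the split extension $1\to\Lambda\to\Gamma\to\mathds{Z}\to 1$ shows the listed relations form a complete presentation, identifying $\Gamma$ with $\mathds{Z}^2\rtimes_A\mathds{Z}$ exactly as in Proposition \ref{Z2semidpZlattice}. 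One convention check: with the paper's multiplication law it is $\tau_3\tau_1\tau_3^{-1}$, not $\tau_3^{-1}\tau_1\tau_3$, that acts on coordinates as $\textrm{diag}(e^{z_3},e^{-z_3})$; the relation as printed amounts to replacing $\tau_3$ by $\tau_3^{-1}$, equivalently $A$ by $A^{-1}$, which is harmless since $\det A=1$ forces $\textrm{tr}(A^{-1})=\textrm{tr}(A)$. As for what each route buys: \cite{Classification of Sol} establishes a much finer, algorithmic classification (17 lattice types, fundamental domains, affine equivalence classes) of which the paper needs only this coarse algebraic consequence, while your argument derives exactly that consequence from general solvable-lattice theory, making the statement essentially self-contained modulo Raghunathan's book.
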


\begin{rmk}
Moln\'ar definition of $sol$ multiplication is slightly different from our definition. In his paper he defines the lattices as a subgroup of right translations of $Sol$. As a result, the statement of Theorem \ref{alllattices} has been readjusted accordingly. 
\end{rmk}

Using notations of Proposition \ref{Z2semidpZlattice} and Theorem \ref{alllattices}, it's easy to see that lattices of Proposition \ref{Z2semidpZlattice}, correspond to $\Gamma(A)=\langle \tau_1,\tau_2,\tau_3 \rangle$ where, $x_3=y_3=0$. Then $e^{z_3}, e^{-z_3}$ are eigenvalues of $A$ and $P$ is the eigenvector matrix of $A$. Now pairing Proposition \ref{Z2semidpZlattice} and Theorem \ref{alllattices}, we conclude the following proposition which will be used lated to study blocking property of all quotients of $Sol$.

\begin{prop}\label{latticeisomorphic}
Every lattice of $Sol$ is conjugate to a semidirect product lattice presented by Proposition \ref{Z2semidpZlattice}.
\end{prop}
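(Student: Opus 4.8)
The plan is to read off the general shape of an arbitrary lattice from Theorem \ref{alllattices} and then remove the only feature that separates it from a semidirect product lattice of Proposition \ref{Z2semidpZlattice}, namely a nonzero translation part in the third generator. By Theorem \ref{alllattices} an arbitrary lattice is $\Gamma=\Gamma(A)=\langle \tau_1,\tau_2,\tau_3\rangle$ with $\tau_1=(x_1,y_1,0)$, $\tau_2=(x_2,y_2,0)$ lying in the plane $z=0$, and $\tau_3=(x_3,y_3,z_3)$ with $P^{-1}AP=\mathrm{diag}(e^{z_3},e^{-z_3})$; since $\mathrm{tr}(A)>2$ forces $e^{z_3}\neq 1$, we have $z_3\neq 0$. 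As noted in the paragraph preceding the statement, the semidirect product lattices of Proposition \ref{Z2semidpZlattice} are exactly those $\Gamma(A)$ for which $x_3=y_3=0$. So it suffices to produce a conjugating element of $Sol$ that kills the translation part of $\tau_3$ while leaving $\tau_1,\tau_2$ (hence $P$) untouched.

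The natural candidate is a pure plane translation $g=(a,b,0)$. First I would record $g^{-1}=(-a,-b,0)$ from the multiplication law, and then compute the two conjugations. Because $\tau_1,\tau_2$ and $g$ all lie in the abelian plane $z=0$, they commute with $g$, so $g\tau_1 g^{-1}=\tau_1$ and $g\tau_2 g^{-1}=\tau_2$; thus $P$ is preserved. For the third generator a direct multiplication gives
\[
g\tau_3 g^{-1}=\bigl(x_3+a(1-e^{z_3}),\, y_3+b(1-e^{-z_3}),\, z_3\bigr).
\]
Since $z_3\neq 0$, both factors $1-e^{z_3}$ and $1-e^{-z_3}$ are nonzero, so setting
\[
a=\frac{x_3}{e^{z_3}-1},\qquad b=\frac{y_3}{e^{-z_3}-1}
\]
makes the translation part of $g\tau_3 g^{-1}$ vanish, i.e.\ $g\tau_3 g^{-1}=(0,0,z_3)$.

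Conjugation by $g$ is an automorphism of $Sol$, so $g\Gamma g^{-1}$ is again a lattice and the defining relations of Theorem \ref{alllattices} are automatically preserved; the conjugate lattice is generated by $\tau_1,\tau_2$ and $(0,0,z_3)$, which is precisely a lattice of the form described in Proposition \ref{Z2semidpZlattice} with $s=z_3$ and $P$ the eigenvector matrix of $A$. I expect the only delicate point to be bookkeeping rather than substance: one must reconcile the two normalizations of $P$ (Theorem \ref{alllattices} uses $P^{-1}AP$ with the rows of $P$ given by $\tau_1,\tau_2$, whereas Proposition \ref{Z2semidpZlattice} uses $PAP^{-1}$ and the embedding $(p,q,r)\mapsto(P(p,q),sr)$), which amounts to passing between $P$ and its inverse/transpose and absorbing it into the identification. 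Once these conventions are aligned, the conjugated lattice is exactly a semidirect product lattice, completing the proof.
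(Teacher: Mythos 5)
Your proof is correct and is essentially the paper's own argument: the paper uses exactly the same conjugating element $g=\bigl(\tfrac{x_3}{e^{z_3}-1},\tfrac{y_3}{e^{-z_3}-1},0\bigr)$, the same observation that $g$ commutes with $\tau_1,\tau_2$ in the abelian plane $z=0$, and the same computation on $\tau_3$, merely phrased in the opposite direction (conjugating the semidirect product lattice $\Gamma_0$ onto $\Gamma$ via $x\mapsto g^{-1}xg$ instead of killing the translation part of $\tau_3$). Nothing further is needed.
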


\begin{proof}
Given a lattice $\Gamma=\Gamma(A)=\langle \tau_1,\tau_2,\tau_3 \rangle$ as in Theorem \ref{alllattices}, let $\Gamma_{0}=\Gamma_{0}(A)=\langle \tau_1,\tau_2,\tau'_3=(0,0,z_3) \rangle$, $g=(\frac{x_3}{e^{z_3}-1},\frac{y_3}{e^{-z_3}-1},0)$, $\phi_g \in \textrm{Aut}(Sol):(x,y,z) \mapsto g^{-1} (x,y,z) g$. Since $g$ commutes with $\tau_1,\tau_2$, $\phi_g(\tau_1)=\tau_1, \phi_g(\tau_2)=\tau_2$. A direct computation shows that $\phi_g(\tau'_3)=\tau_3$, hence $\phi_g(\Gamma_0)=\Gamma$.
\end{proof}

\section{Blocking property of $Sol$ quotient spaces}

This section concludes with the proof of the main theorems. We first need a few technical lemmas that will be applied in the body of the proofs. 

\begin{lem}
Let $A=\begin{pmatrix} a & b \\
c & d  \\
\end{pmatrix}\in \textrm{SL}_2(\mathds{Z})$ with eigenvalues $\lambda=e^s\neq 1,\lambda^{-1}$, and $P= \begin{pmatrix} P_{11} & P_{12} \\
P_{21} & P_{22}  \\
\end{pmatrix}\in \textrm{GL}_2(\mathds{R})$ be such that $PAP^{-1}=\begin{pmatrix} \lambda & 0 \\
0 & \lambda^{-1}  \\
\end{pmatrix}$ and $P_{11}=P_{22}=1$. Then $\lambda \notin \mathds{Q}$, $(P(p,q),sr)=(p-\frac{1}{b}(e^s-a)q,q-\frac{1}{c}(e^{-s}-d)p,sr)$. 
\end{lem}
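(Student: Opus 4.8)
The statement splits into two logically independent claims, and I would dispatch them separately.

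For the irrationality of $\lambda$, the plan is to use the characteristic polynomial of $A$. Since $A \in \textrm{SL}_2(\mathds{Z})$, we have $\chi_A(t) = t^2 - \textrm{tr}(A)\,t + \det(A) = t^2 - (a+d)t + 1$, which is monic with integer coefficients. The rational root theorem then says that any rational root must be an integer dividing the constant term $1$, hence $\pm 1$. But by hypothesis $\lambda = e^s > 0$ and $\lambda \neq 1$, so $\lambda$ cannot be $\pm 1$; therefore $\lambda \notin \mathds{Q}$. (Implicitly $s \neq 0$ is exactly the condition $\lambda \neq 1$, placing us in the genuinely hyperbolic regime $|\textrm{tr}(A)| > 2$, where the eigenvalues are real and distinct, so $P$ exists.)

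For the entries of $P$, I would rewrite the diagonalization hypothesis $PAP^{-1} = \textrm{diag}(\lambda,\lambda^{-1}) =: D$ as the equivalent matrix identity $PA = DP$, and then read off its four scalar equations after imposing the normalization $P_{11}=P_{22}=1$. The $(1,1)$ and $(1,2)$ equations each pin down $P_{12}$, yielding $P_{12} = (\lambda-a)/c$ and $P_{12} = b/(\lambda-d)$; symmetrically the $(2,1)$ and $(2,2)$ equations each pin down $P_{21}$. Rewriting these through $e^{\pm s}-a$ and $e^{\mp s}-d$ over the entries $b$ and $c$ and substituting into $P(p,q)$ gives the displayed formula, with the third coordinate $sr$ simply carried along from the embedding of Proposition~\ref{Z2semidpZlattice}.

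The only genuine work is the consistency check, which I expect to be the main (if entirely elementary) obstacle: one must verify that the two determinations of $P_{12}$ coincide, and likewise for $P_{21}$, and that the result matches the asserted closed form. Both reconciliations reduce to the two defining constraints on $A$. The trace identity $a+d = \lambda + \lambda^{-1}$ converts $\lambda - a$ into $d - \lambda^{-1}$, letting one pass freely between the $e^{s}$- and $e^{-s}$-expressions, while $\det A = ad - bc = 1$ reconciles the numerator/denominator pairs arising from the two rows (for instance $bc = (\lambda-a)(\lambda-d)$ follows from $\chi_A(\lambda)=0$ together with $\det A = 1$). Once this bookkeeping is in place, substitution is mechanical and the stated identity for $(P(p,q),sr)$ drops out; throughout one must simply keep careful track of which eigenvalue/row convention is being used so that $P_{12}$ and $P_{21}$ are assigned to the correct coordinates.
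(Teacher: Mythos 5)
Your proof of the irrationality claim is correct, and it takes a genuinely different (and cleaner) route than the paper: the paper writes $\lambda=\textrm{tr}(A)/2\pm\sqrt{\textrm{tr}(A)^2-4}/2$ and argues that $\textrm{tr}(A)^2-4$ cannot be a perfect square, whereas your rational-root-theorem argument is immediate. Your method for the matrix entries also differs from the paper's: the paper constructs $P$ explicitly as $(\tilde P/\det\tilde P)^{-1}$, where $\tilde P=[v_1,v_2]$ is the normalized eigenvector matrix, and then computes, while you solve the linear system $PA=DP$ under the normalization $P_{11}=P_{22}=1$. Your route is legitimate in principle and even yields uniqueness of $P$ for free.

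However, there is a genuine gap in your final step: the entries you derive do \emph{not} match the displayed formula, and no trace or determinant identity can make them match. From $PA=DP$ one gets, exactly as you say,
\[
P_{12}=\frac{\lambda-a}{c}=\frac{b}{\lambda-d},\qquad
P_{21}=\frac{\lambda^{-1}-d}{b}=\frac{c}{\lambda^{-1}-a}.
\]
The trace identity $a+d=\lambda+\lambda^{-1}$ rewrites only the numerators ($\lambda-a=d-\lambda^{-1}$, $\lambda^{-1}-d=a-\lambda$), and $(\lambda-a)(\lambda-d)=bc$ only interchanges the two expressions already listed for each entry; nothing turns the denominator $c$ of $P_{12}$ into $b$. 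So, reading $P(p,q)$ as matrix times column vector, the hypothesis forces
\[
P(p,q)=\left(p-\tfrac{1}{c}(e^{-s}-d)q,\ q-\tfrac{1}{b}(e^{s}-a)p\right),
\]
which is the \emph{transpose} of the stated formula, not the stated formula itself. The discrepancy is real, not notational sleight of hand you can absorb: for $A=\begin{pmatrix}2&1\\1&1\end{pmatrix}$ the unique normalized $P$ with $PAP^{-1}=\textrm{diag}(\lambda,\lambda^{-1})$ has $P_{12}=\lambda-2>0$, while the displayed formula demands $P_{12}=-(\lambda-2)<0$. The formula as printed is the one for the row-vector action $(p,q)\mapsto(p,q)P$, which is the convention implicit in Moln\'ar's presentation (Theorem \ref{alllattices}, where the lattice generators are the \emph{rows} of $P$) and consistent with the paper's own eigenvector construction; it is harmless for the later proofs, which only use that the first coordinate is an integer plus an irrational multiple of an integer. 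A correct blind write-up therefore had to either adopt the row-vector reading (equivalently, work with left eigenvectors of $A$) or flag the transposition in the statement. Asserting that the trace and determinant identities reconcile the two conventions — the step you yourself singled out as "the only genuine work" — is precisely the step that fails.
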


\begin{proof}
Solving the quadratic equation for $\lambda$ it follows that $\lambda=\textrm{tr}(A)/2\pm \sqrt{\textrm{tr}(A)^2-4}/2$, since $\textrm{tr}(A) \in \mathds{Z}$, and $\textrm{tr}(A)=(e^s+e^{-s})/2>2$, $\textrm{tr}(A)^2-4$ can not be a perfect square, so is irrational. Let $\bf{v_1},\bf{v_2}$ be the eigenvectors associated to $\lambda, \lambda^{-1}$, so that the first component of $\bf{v_1}$ and the second component of $\bf{v_2}$ are equal to 1, respectively. Assume $\tilde{P}=[\bf{v_1},\bf{v_2}]$, and let $P=(\tilde{P}/\textrm{det}(\tilde{P}))^{-1}$. A direct computation shows that $P_{11}=P_{22}=1$, also the second statement of the Lemma. 
\end{proof}

Next we prove the following lemma. 

\begin{lem}\label{linear-iso}
Let $A$ be conjugate to its eigenvalue matrix via matrices $P_1,P_2 \in \textrm{GL}_2(\mathds{R})$ as in the statement of Proposition \ref{Z2semidpZlattice}, and $\Gamma_{i},\, i=1,2$ be the two associated lattices in $Sol$, i.e. images of the embeddings $(p,q,r) \mapsto \left( P_{i}(p,q),sr\right)$. Then $B=P_2P_1^{-1}$ is diagonal and the mapping $\phi:Sol \rightarrow Sol$, $(x,y,z) \mapsto (B(x,y),z)$ is a Lie group automorphism. In addition $\phi(\Gamma_1)=\Gamma_2$, hence quotient spaces $Sol/\Gamma_1$ and $Sol/\Gamma_2$ have identical blocking property, i.e. $m=g\Gamma_1$ is blockable from the identity $m^{0}_1=\Gamma$ if and only if $\phi(g)\Gamma'$ is blockable from $m^{0}_2=\Gamma'$. 
\end{lem}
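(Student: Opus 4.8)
The plan is to establish the three structural assertions in order — diagonality of $B$, that $\phi$ is a Lie group automorphism, and that $\phi(\Gamma_1)=\Gamma_2$ — and then to deduce the blocking equivalence from the general principle that an automorphism of $Sol$ carrying one lattice to another transports connecting curves to connecting curves, hence blocking sets to blocking sets.

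First I would prove that $B=P_2P_1^{-1}$ is diagonal. Writing $D=\begin{pmatrix} \lambda & 0 \\ 0 & \lambda^{-1} \end{pmatrix}$, the hypotheses $P_1AP_1^{-1}=D=P_2AP_2^{-1}$ give $A=P_1^{-1}DP_1$, so that $BDB^{-1}=P_2(P_1^{-1}DP_1)P_2^{-1}=P_2AP_2^{-1}=D$. Thus $B$ commutes with $D$; since $\lambda=e^s$ with $s\neq 0$ we have $\lambda\neq\lambda^{-1}$, so $D$ has distinct eigenvalues and its centralizer in $\textrm{GL}_2(\mathds{R})$ consists exactly of the diagonal matrices. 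Hence $B=\textrm{diag}(\mu,\nu)$ with $\mu,\nu\neq 0$.

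Next I would verify that $\phi(x,y,z)=(\mu x,\nu y,z)$ is a homomorphism by substituting into the product law $(x_1,y_1,z_1)(x_2,y_2,z_2)=(x_1+e^{z_1}x_2,\,y_1+e^{-z_1}y_2,\,z_1+z_2)$. The essential point is that a diagonal $B$ rescales the $x$- and $y$-coordinates separately, which is precisely what is compatible with the factors $e^{z_1}$ and $e^{-z_1}$; a non-diagonal $B$ would not respect the multiplication, so diagonality from the previous step is exactly what is needed. As $\mu,\nu\neq 0$, $\phi$ is a smooth bijection with smooth inverse, hence a Lie group automorphism. The identity $\phi(\Gamma_1)=\Gamma_2$ is then immediate: for $(p,q,r)\in\mathds{Z}^3$ one computes $\phi(P_1(p,q),sr)=(BP_1(p,q),sr)=(P_2(p,q),sr)$, and these are exactly the elements of $\Gamma_2$.

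Finally, for the blocking statement I would use that $\phi$, being an automorphism with $\phi(\Gamma_1)=\Gamma_2$, descends to a diffeomorphism $\bar\phi:Sol/\Gamma_1\to Sol/\Gamma_2$, $g\Gamma_1\mapsto\phi(g)\Gamma_2$, sending the base coset $\Gamma_1$ to $\Gamma_2$. The key observation is that $\phi$ intertwines the exponential maps, $\phi\circ\exp=\exp\circ\,d\phi$, where $d\phi$ is the induced automorphism of $\mathfrak{Sol}$; consequently $\bar\phi$ carries the connecting curve $t\mapsto\exp(tX)g\Gamma_1$ to $t\mapsto\exp(t\,d\phi(X))\phi(g)\Gamma_2$, and since $d\phi$ is a bijection of $\mathfrak{Sol}$ this yields a bijection between $C_{\Gamma_1,g\Gamma_1}$ and $C_{\Gamma_2,\phi(g)\Gamma_2}$ that respects the images of the curves. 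Therefore a finite set $B$ blocks the pair $(\Gamma_1,g\Gamma_1)$ if and only if $\bar\phi(B)$ blocks $(\Gamma_2,\phi(g)\Gamma_2)$, which is the claimed equivalence. I expect this last step to be the only genuinely delicate one: one must check carefully that the correspondence of connecting curves is a bijection preserving the property of meeting a blocking set, so that both finiteness and the blocking condition transfer in each direction, whereas the diagonality computation and the homomorphism check are routine.
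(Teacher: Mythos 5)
Your proposal is correct and takes essentially the same approach as the paper: establish diagonality of $B$, verify the homomorphism property against the semidirect product law (which hinges on $B$ commuting with $\mathrm{diag}(e^{z_1},e^{-z_1})$), note $\phi(\Gamma_1)=\Gamma_2$, and transfer blocking via the fact that an automorphism carries one-parameter subgroups to one-parameter subgroups. The only minor differences are cosmetic: you get diagonality from the centralizer of a matrix with distinct eigenvalues, whereas the paper observes that $P_1^{-1},P_2^{-1}$ are eigenvector matrices of $A$ and so differ by a diagonal column scaling, and your careful final step spelling out $\phi\circ\exp=\exp\circ\,d\phi$ is exactly what the paper compresses into its one-line closing remark.
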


\begin{proof}
Since $P_1^{-1},P_2^{-1}$ are eigenvector matrices of $A$, there exist an invertible diagonal matrix $B_1$ such that $P_2^{-1}=P_1^{-1}B_1$. Letting $B=B_1^{-1}$, it follows that $P_2P_1^{-1}=B$. Since $B$ is nonsingular, $\phi$ is a diffeomorphism on $Sol$, and it's clear from the definition $\phi(\Gamma_1)=\Gamma_2$. The following calculation demonstrates that the mapping is a homomorphism:
$(x_1,y_1,z_1)(x_2,y_2,z_2)=((x_1,y_1)+\begin{pmatrix} e^{z_1} & 0 \\
0 & e^{-z_1}  \\
\end{pmatrix}(x_2,y_2),z_1+z_2) \mapsto (B(x_1,y_1)+B\begin{pmatrix} e^{z_1} & 0 \\
0 & e^{-z_1}  \\
\end{pmatrix}(x_2,y_2),z_1+z_2)=(B(x_1,y_1)+\begin{pmatrix} e^{z_1} & 0 \\
0 & e^{-z_1}  \\
\end{pmatrix}B(x_2,y_2),z_1+z_2)=(B(x_1,y_1),z_1)(B(x_2,y_2),z_2)$.
Since Lie group isomorphisms map one parameter subgroups to one parameter subgroups, the last statement follows immediately.  
\end{proof}

Now we are ready to prove the main theorems. 

\begin{proof}[Proof of Theorem \ref{thm-main}]
Assume that matrix $A=\begin{pmatrix} a & b \\
c & d  \\
\end{pmatrix}\in \textrm{SL}_2(\mathds{Z})$ with eigenvalues $\lambda \neq1,\lambda^{-1}$ is given, and $P \in \textrm{GL}_2(\mathds{R})$ is such that $PAP^{-1}=\begin{pmatrix} \lambda & 0 \\
0 & \lambda^{-1}  \\
\end{pmatrix}$ and $P_{11}=P_{22}=1$ (Note that since switching $A \leftrightarrow -A$ doesn't change $P$ we may assume $\lambda>0$  and since $\lambda \neq 1$, $\textrm{tr}(A)>2$). Let $\lambda=e^s, s\neq 0$, $\Gamma$ be the image lattice of $\mathds{Z}^2 \rtimes_A \mathds{Z}$ through embedding in proposition \ref{Z2semidpZlattice}, $g=(0,y,z) \in Sol$, $y\neq 0$, and $m=g\Gamma$. We prove that $m$ is not blockable from the identity $m_0=\Gamma$. Changing the representative $g$ for $m=g\Gamma$ if necessary, we may assume $z\neq 0$. To the contrary, assume that $m$ is blockable from identity $m_0$. Let $r_i$ be a sequence of integers, so that $sr_i$ is strictly increasing and, $sr_i \rightarrow \infty$, as $i \rightarrow \infty$, and let $\gamma_i=(0,0,s r_i) \in \Gamma$. By proposition \ref{prop:fcosets}, for a suitable choice of $t_{i}$'s where $0<t_{i}<1$, we should have $\{ (g\gamma_{i})^{t_{i}} \} \subset \cup_{n=1}^N \tilde{g_n}\Gamma$; passing to a subsequence if necessary, we may assume $(g\gamma_{i})^{t_{i}} \in \tilde{g}\Gamma$ for some fixed $\tilde{g} \in Sol$, in addition we may assume $\tilde{g}=(g\gamma_{1})^{t_{1}}=(0,\tilde{y},\tilde{z})$. 
\vspace{0.1in}
\par
Since $g\gamma_i=(0,y,z+sr_i)$, By Corollary \ref{g^t for Sol}: 
\begin{equation}\label{ggamma-i-ti}
(g\gamma_{i})^{t_{i}}=\left(0,\frac{y}{e^{-z-sr_i}-1}(e^{-t_i(z+sr_i)}-1),t_i(z+sr_i) \right),
\end{equation}

and thus for all $i,j$ the first component of $[(g\gamma_{i})^{t_{i}}]^{-1}\cdot [(g\gamma_{j})^{t_{j}}]$ is 0. But $[(g\gamma_{i})^{t_{i}}]^{-1}\cdot [(g\gamma_{j})^{t_{j}}] \in \Gamma$ so $[(g\gamma_{i})^{t_{i}}]^{-1}\cdot [(g\gamma_{j})^{t_{j}}]=(\tilde{p}-\frac{1}{b}(e^s-a)\tilde{q},\tilde{q}-\frac{1}{c}(e^{-s}-d)\tilde{p},s\tilde{r})$ for some $\tilde{p},\tilde{q},\tilde{r} \in \mathds{Z}$. So $\tilde{p}-\frac{1}{b}(e^s-a)\tilde{q}=0$ and since $e^{s}=\lambda \notin \mathds{Q}$, we must have $\tilde{p}=\tilde{q}=0$. Therefore it follows that 
\begin{equation}
[(g\gamma_{i})^{t_{i}}]^{-1}\cdot [(g\gamma_{j})^{t_{j}}]=(0,0,s\tilde{r_{ij}}), \ \tilde{r_{ij}} \in \mathds{Z},
\end{equation}
and hence
\begin{equation}\label{s-rij}
t_j(z+sr_j)-t_i(z+sr_i)=s\tilde{r_{ij}}, \ r_{ij} \in \mathds{Z}\, .
\end{equation}
\vspace{0.1in}
\par
Now letting $i=1$, we conclude that 
\begin{equation}\label{ggammaj-ver-line}
(g\gamma_{j})^{t_{j}}=\tilde{g}(0,0,s\tilde{r_{j}})=(0,\tilde{y},\tilde{z}+s\tilde{r_{j}})
\end{equation}
which means $\{(g\gamma_{i})^{t_{i}}\}\cap \tilde{g}\Gamma$ lies on a vertical line in $yz$ plane. 
\vspace{0.1in}
\par
Solving for $t_i$ using the second components of equations \ref{ggamma-i-ti} and \ref{ggammaj-ver-line} it follows that
\begin{equation}
t_i=-\frac{1}{z+sr_i}\ln\left( \frac{\tilde{y}}{y}(e^{-z-sr_i}-1)+1\right);
\end{equation}
Note that $\tilde{y}/y$ has to be positive. Now plugging the formula for $t_i$ and $t_j$ in equation \ref{s-rij} gives us:
\begin{equation}
\frac{e^{-z-sr_i}-1+y/\tilde{y}}{e^{-z-sr_j}-1+y/\tilde{y}}=e^{s\tilde{r_{ij}}}
\end{equation}
Setting, $j=i+1$, $i \rightarrow \infty$, the left side of the above equation goes to 1. So, for large enough $i,j=i+1$, $\tilde{r_{ij}}=0$, and so $r_i=r_{i+1}$ which is a contradiction. 
\vspace{0.1in}
\par
Now, if $g=(x,0,z), x\neq 0$, replace $(0,0,sr_i)$ with $(0,0,-sr_i)$; repeating a similar argument on the first component of $(g\gamma_i)^{t_i}$, proves that $m=(x,0,z)\Gamma$ is also not blockable from $m_0$. 
\end{proof}

Knowing all lattices of $Sol$ are conjugate to semidirect products, we are ready to prove the second theorem. 

\begin{proof}[Proof of Theorem \ref{thm-Mn}]
Given a lattice $\Gamma=\Gamma(A)$ in $Sol$, by Proposition \ref{latticeisomorphic} and Lemma \ref{linear-iso} it is isomorphic to a lattice $\Gamma_0$ presented in Proposition \ref{Z2semidpZlattice} with $P_{11}=P_{22}=1$. From Theorem \ref{thm-main}, all points in $X=\{(0,y,z)\Gamma_0 | y,z \in \mathds{R}, y\neq 0\}$ are not blockable from the identity. We show that $X\Gamma_0$ is dense in $Sol$, which implies $\Gamma_0$ is dense in $Sol/\Gamma_0$. Noting that $X\Gamma_0=\{(0,y,z)\gamma | y,z \in \mathds{R}, y\neq 0, \gamma \in \Gamma_0 \}= \{((0,y)+ \begin{pmatrix} e^z & 0 \\
0 & e^{-z}  \\
\end{pmatrix}P(p,q),z+sr)| y,z \in \mathds{R}, p,q,r \in \mathds{Z}, y\neq 0\}=\{(e^{z}(p-\frac{1}{b}(e^s-a)q),y+e^{-z}(q-\frac{1}{c}(e^{-s}-d)p),z+sr) |
y,z \in \mathds{R}, p,q,r \in \mathds{Z}, y\neq 0\}\, .$ 
\vspace{0.1in}
\par
Since $y\neq 0$ and $z$ are free to vary over $\mathds{R}$, set of the second and third components of $X\Gamma$ are $=\mathds{R}$, in addition since $e^s$ is not rational, for every given $z$, set of the first component of $X\Gamma$ is dense in $\mathds{R}$, and hence $X\Gamma_0$ is dense in $Sol$. Since isomorphic lattices (via an automorphism) of a Lie group carry similar blocking properties, with one to one correspondence between non-blockable pairs, $Sol/\Gamma$ has a dense subset of points, not blockable from identity $m_0=\Gamma$, which implies the statement of Theorem \ref{thm-Mn}. 
\end{proof}

\bibliography{aomsample}
\bibliographystyle{aomalpha}

\end{document}